\documentclass{amsart}

\usepackage{amsmath,amsthm,amssymb,amscd,enumerate,mathtools,enumitem}
\usepackage{amsfonts}
\usepackage{rotating}
\usepackage{euscript}
\usepackage{pst-node}
\usepackage{epsfig,verbatim}
\usepackage{stackengine,scalerel}
\stackMath

\def\be{\begin{equation}}
\def\ee{\end{equation}}

\def\C{{\mathbb C}} 
\def\f{\mathcal}
 
\def\P{{\mathbb P}}

\def\ord{{\rm ord\,}}

\def\phi{{\varphi}}
 
\def\tt{\widetilde}
\def\deg{{\rm deg\,}}

\def\GCD{{\rm gcd }}

\def\lcm{{\rm lcm }}

\def\bp{\begin{proposition}}
\def\ep{\end{proposition}}

\def\bt{\begin{theorem}}
\def\et{\end{theorem}}
\def\br{\begin{remark}}
\def\er{\end{remark}}
\def\be{\begin{equation}}
\def\bee{\begin{equation*}}
\def\l{\label}
\def\la{\label}

\def\ee{\end{equation}}
\def\eee{\end{equation*}}
\def\bl{\begin{lemma}}
\def\el{\end{lemma}}
\def\bc{\begin{corollary}}
\def\ec{\end{corollary}}
\def\pr{\noindent{\it Proof. }}

\def\bd{\begin{definition}}
\def\ed{\end{definition}}
\def\t{\widetilde}
\def\tilde{\widetilde}
\def\h{\widehat}

\newtheorem{theorem}{Theorem}[section]
\newtheorem{lemma}[theorem]{Lemma}
\newtheorem{definition}[theorem]{Definition}
\newtheorem{corollary}[theorem]{Corollary}
\newtheorem{proposition}[theorem]{Proposition}
\newtheorem{problem}[theorem]{Problem}

\theoremstyle{definition}

\theoremstyle{definition}
\newtheorem{remark}[theorem]{Remark}
\def\bpr{\begin{problem}}
\def\epr{\end{problem}}

\mathtoolsset{showonlyrefs}

\begin{document}

\title{
Functional equations in branched covering maps
}

\author[F. Pakovich]{Fedor Pakovich}
\thanks{
This research was supported by ISF Grant  No. 1092/22}
\address{Department of Mathematics, Ben-Gurion University of the Negev, Israel}
\email{
pakovich@math.bgu.ac.il}


\begin{abstract}
 We study the functional equation  
$
f \circ p = g \circ q,$   
where \( f \), \( g \), \( p \),  \( q \) are branched covering maps between   closed surfaces. 
In particular, we extend results on semiconjugate holomorphic maps between  compact Riemann surfaces to this broader setting. 
As an application, we present an alternative proof of the fact that every quotient of a torus endomorphism  
has a parabolic orbifold, along with some extensions of this result.
\end{abstract}

\maketitle

\section{Introduction}
The study of the functional equation
\be \l{fu} f \circ p = g \circ q \ee
in rational functions has a long history, originating from the pioneering paper of Ritt (\cite{r1}), who focused on its polynomial solutions. This equation is closely related to the algebro-geometric problem of describing the algebraic curves
\be \l{cu} f(x) - g(y) = 0 \ee
that have a genus-zero factor, which naturally arises in various areas of mathematics, such as number theory, complex analysis, and dynamics (see, e.g., \cite{lo} and the bibliography therein). While a complete description of solutions of \eqref{fu} in rational functions is not yet known, much progress has been made, including reasonable descriptions of commuting rational functions (\cite{ere}, \cite{rev}, \cite{ritt}) and semiconjugate rational functions (\cite{semi}, \cite{dyna}, \cite{rec}, \cite{lattes}, \cite{fin}).

In this paper, we study equation \eqref{fu} under the assumption that the 
maps involved are branched covering maps between closed surfaces, 
where by a surface we always mean an oriented connected two-dimensional topological 
manifold without boundary. In particular, we aim to generalize the results concerning 
semiconjugate holomorphic maps between compact Riemann surfaces from \cite{semi} and \cite{lattes} to this broader 
setting.

The primary motivation for this investigation lies in dynamics. 
Specifically, the Riemann existence theorem implies that if \( f: X \to Y \) is a branched 
covering map between closed surfaces, then for every conformal structure 
on \( Y \), there exists a conformal structure on \( X \) such that 
\( f \) becomes holomorphic. Thus, many properties of an {\it individual}  
branched covering map coincide with those of holomorphic maps. 
However, the {\it dynamics} of branched covering maps exhibit distinctive 
behaviors compared to holomorphic maps, making the study of these 
dynamics a topic of independent interest (see, e.g., the books \cite{bm}, \cite{pil}). 
As one of the most famous results in this context, we mention Thurston's theorem, which characterizes those postcritically finite branched self-covering maps of the sphere that are Thurston equivalent to rational functions (see \cite{dh}, \cite{th}). 

A particularly relevant concept is the semiconjugacy relation  
\be  \l{se}   
f \circ p = p \circ q   
\ee   
for branched covering maps. 
Specifically, in \cite{bm} and \cite{bm2}, the question was raised as to whether 
every branched covering map \( f \) of degree at least two, making the diagram  
\be \l{d} 
\begin{CD}  
T^2 @>q>> T^2 \\  
@VV p V @VV p V \\  
S^2 @>f >> S^2,   
\end{CD}  
\ee   
commute 
for some branched covering maps \( p \) and \( q \), where \( S^2 \) is a sphere 
and \( T^2 \) is a torus, necessarily has a parabolic orbifold (see Section \ref{s32} 
for more details). For holomorphic maps, the 
validity of this statement is well known. However,  this known result does not 
imply its topological analogue,  which reflects the difference between the two settings.

A partial solution to the above question was given in \cite{bm2}, and it was answered in the affirmative in \cite{lx}. In this paper, we refine the results of \cite{bm2} and \cite{lx}, providing a detailed characterization of the solutions to \eqref{se} in branched covering maps, including those that are not of the form \eqref{d}. Furthermore, we offer a unified topological treatment of several results concerning the functional equation \eqref{fu} that are typically established either in the holomorphic setting or in algebro-geometric terms, thereby demonstrating their validity for general branched covering maps.

To formulate our results explicitly, let us introduce some definitions. 
Let \linebreak $p\colon R \to C_1$ and $q\colon R \to C_2$ be branched covering maps between closed surfaces. We say that $p$ and $q$ {\it have no non-trivial common compositional right factor} if the equalities 
\be \l{kaban} 
p = \tt p \circ w, \quad q = \tt q \circ w,
\ee 
where $\tt p\colon \tt R \to C_1$, $\tt q\colon \tt R \to C_2$, and $w\colon R \to \tt R$ are branched covering maps between closed surfaces, imply that $\deg w = 1.$

Let us recall that {\it a surface orbifold} is a pair $\f O=(R,\nu)$ consisting of a surface $R$ and a ramification function $\nu: R \to \mathbb N$, which takes the value $\nu(z)=1$ except at isolated points. For an orbifold $\f O=(R,\nu)$ with closed $R$, the {\it Euler characteristic} of $\f O$ is the number
\be \l{euler} 
\chi(\f O)=\chi(R)+\sum_{z\in R}\left(\frac{1}{\nu(z)}-1\right). 
\ee 
We say that a point $z \in R$ is a ramified point of $\f O$ if $\nu(z) > 1$. The set of ramified points of $\f O$ is denoted by $c(\f O).$ Finally, the signature $\nu(\f O)$ of $\f O$ is defined as the list of all values $\nu(z)$, where $z$ ranges over the ramified points of $\f O$, and each value is included as many times as it occurs among the ramified points.

If $f:\, R_1\rightarrow R_2$ is a branched covering map between surfaces, 
and \linebreak $\f O_1=(R_1,\nu_1)$ and $\f O_2=(R_2,\nu_2)$ are orbifolds,  
then we say that 
$f:\, \f O_1\rightarrow \f O_2$ is a {\it covering map 
between orbifolds} if the equality 
\be \l{us} \nu_{2}(f(z))=\nu_{1}(z)\deg_zf\ee 
holds for all $z\in R_1$, where $\deg_zf$ denotes the local degree of $f$ 
at the point $z$. If, instead of \eqref{us}, the condition 
\be \l{rys} \nu_{2}(f(z))=\nu_ {1}(z)\GCD(\deg_zf, \nu_{2}(f(z)))\ee 
is satisfied for all $z\in R_1$, we say that 
$f:\, \f O_1\rightarrow \f O_2$ is a {\it minimal holomorphic map 
between orbifolds}.  Note that since \eqref{us} implies \eqref{rys}, every 
covering map is also a minimal holomorphic map. 
However, the class of minimal holomorphic maps introduced in \cite{semi} is broader and particularly useful in studying the functional equation \eqref{fu}, as property \eqref{rys} successfully captures the ramification behavior of the maps involved.

 With each branched covering map $f:\, R_1\rightarrow R_2$ between closed     surfaces, 
we associate two orbifolds $\f O_1^f=(R_1,\nu_1^f)$ and 
$\f O_2^f=(R_2,\nu_2^f)$, setting $\nu_2^f(z)$  
equal to the least common multiple of local degrees of $f$ at the points 
of the preimage $f^{-1}\{z\}$, and $$\nu_1^f(z)=\frac{\nu_2^f(f(z))}{\deg_zf}.$$ 
Note that, by construction, $f:\f O_1^f\rightarrow \f O_2^f$ is a covering map between orbifolds. 

For every point \( z \in R_2 \), we also associate with $f:\, R_1\rightarrow R_2$  a partition \( \mu_{f,z} \) of \( \deg f \), defined as the collection of local degrees of \( f \) at the points in \( f^{-1}\{z\} \). 
 Finally, 
for an arbitrary partition  $\mu=(a_1,a_2,\dots, a_k)$ of an integer  $n\geq 1$ and an integer  $d\geq 1$, we define another partition $\mu^d$ of $n$ by replacing each number $a_i,$ $1\leq i \leq k$, with the number ${\rm lcm}(a_i,d)/d$  taken 
$\gcd(a_i,d)$ times.

In this notation, our main result is the following statement.

\bt \l{t1} Let $f,p,g,q$ be branched covering maps between closed surfaces  such  that $\deg f\geq 2,$ 
the diagram 
\be 
\begin{CD}
R @>q>> C_2\\
@VV p V @VV g V\\ 
C_1 @>f >> C\ 
\end{CD}
\ee
commutes,  $p$ and $q$  have no non-trivial common compositional right factor, and $\deg p=\deg g$.  Then for every $z\in C_1$ the equality $$\mu_{p,z}=\mu_{g,f(z)}^{\deg_z f}$$ holds. Moreover, 
$f:\f O_2^p\rightarrow \f O_2^g$ and $q:\f O_1^p\rightarrow \f O_1^g$ are minimal holomorphic maps between orbifolds.  
\et

Theorem \ref{t1} is particularly useful when applied to equation \eqref{se}, where the condition $\deg p = \deg g$ is satisfied automatically. However, Theorem \ref{t1} is applicable only if $p$ and $q$ do not have a non-trivial common compositional right factor, whereas \eqref{se} admits solutions for which this condition fails, as demonstrated by the construction below.

Let $R$ be a closed surface, and $q: R \rightarrow R$ a branched self-covering map of degree at least 
two. Note that by the Riemann-Hurwitz formula, this is possible only if $R$ is a torus or a sphere. For any decomposition $q = v \circ u$, where $u: R \rightarrow R$ 
and $v: R \rightarrow R$ are branched covering maps, we say that the 
branched covering map $q' : R \rightarrow R$, defined by 
$q'  = u \circ v$, is an {\it elementary transformation} of $q$. 
We say that two branched covering maps $q\colon R \to R$ and $f\colon R \to R$ are {\it equivalent} and write $q \sim f$ if there exists a chain of elementary transformations connecting them. 

Note that, for any orientation-preserving homeomorphism $\phi$ of $R$, the equality 
$$q = (q \circ \phi) \circ \phi^{-1}$$ 
implies that
$$q \sim \phi^{-1} \circ q \circ \phi,$$ 
meaning that each equivalence class is a union of conjugacy classes under orientation-preserving  homeomorphisms.

The equivalence relation $\sim$ 
is closely related to the problem of describing  semiconjugate branched covering maps. Specifically, for 
\( q'  \) and $q$ as defined above, we obviously have:
\be \l{one}
q'  \circ u = u \circ q, \quad q \circ v = v \circ q' ,
\ee
which implies by induction that if $q \sim f$, then $q$ is 
semiconjugate to $f$, and $f$ is semiconjugate to $q$. 
Note that in the holomorphic setting, in addition to the problem of describing semiconjugate rational functions, the relation $\sim$ is also naturally related to  other dynamical problems (see \cite{xie, rec, mut, fin, rev}).

It is clear that for $u$ and $v$ of degree greater than one the semiconjugacies defined by the equalities \eqref{one} do not satisfy the assumptions of Theorem \ref{t1}. Nevertheless, Theorem \ref{t1} yields the following general statement, which implies in particular an affirmative answer to the question raised in \cite{bm}, \cite{bm2}. 

For a surface $S$, we denote by $\f S$ the non-ramified orbifold on $S$, that is, the orbifold $(S,\nu)$ with $\nu(z) \equiv 1$.

\bt \l{t2} Let $f,p,q$ be branched covering maps such that $\deg f\geq 2$ and the diagram 
\be 
\begin{CD}
T^2 @>q>> T^2\\
@VV p V @VV p V\\ 
S^2 @>f >> S^2
\end{CD}
\ee
commutes. Then there exists an orbifold of zero Euler characteristic $\f O$ on $S^2$ such that the diagram 
\be 
\begin{CD}
\f T^2 @>q>> \f T^2\\
@VV p V @VV p V\\ 
\f O @>f >> \f O
\end{CD}
\ee
consists of covering maps between orbifolds. 
\et
Notice that since \( \f T^2 \) is unramified, the condition that $p:\f T^2\rightarrow \f O$ is a covering map implies that \( p \) has the same local degree at each point of \( p^{-1}\{z\} \). On the other hand, the condition \( \chi(\f O) = 0 \) is equivalent to \( \f O \) having one of the following signatures: 
\be \l{list} 
(2,2,2,2), \ \ \ (3,3,3), \ \ \ (2,4,4), \ \ \ (2,3,6). 
\ee 
Finally, the condition that \( f : \f O \to \f O \) is a covering map between orbifolds is the  analogue of the corresponding characterization of ordinary Lattès maps.

Let $f:S^2\rightarrow S^2$ be a branched self-covering map. Then the  semiconjugacy relation \eqref{se} does not 
necessarily have the form \eqref{d} described in Theorem \ref{t2}, as \( q \) 
can also be a map between spheres. The following result complements 
Theorem \ref{t2} in this case. 
\bt \label{t3} Let $f,p,q$ be branched covering maps such that $\deg f\geq 2$ and  the diagram 
\be 
\begin{CD}
S^2 @>q>> S^2\\
@VV p V @VV p V\\ 
S^2 @>f >> S^2
\end{CD}
\ee
commutes. Then either $q\sim f$, or there exist orbifolds of non-negative Euler characteristic $\f O_1$ and $\f O_2$ on $S^2$
such that the diagram 
\be 
\begin{CD}
\f O_1 @>q>> \f O_1\\
@VV p V @VV p V\\ 
\f O_2 @>f >> \f O_2
\end{CD}
\ee
consists of minimal holomorphic maps between orbifolds. Furthermore, either \linebreak $\chi(\mathcal{O}_1) = \chi(\mathcal{O}_2) = 0$, or $0 < \chi(\mathcal{O}_2) < \chi(\mathcal{O}_1)$. In the last case, the possible collections of ramification indices of $\mathcal{O}_1$ and $\mathcal{O}_2$ are the following: $(n, n)$ or $(2, 2, n)$ for some $n \ge 2$, or one of the triples $(2, 3, 3)$, $(2, 3, 4)$, $(2, 3, 5)$. In addition, $\mathcal{O}_1$ may be a non-ramified sphere.
\et

Note that in contrast to Theorem \ref{t2}, Theorem \ref{t3} allows for the case $q \sim f$, which cannot occur for diagram \eqref{d}. In addition, the conclusion of Theorem \ref{t2} that $p$, $q$, and $f$ are covering maps between orbifolds is replaced in Theorem \ref{t3} by the weaker condition that they are minimal holomorphic maps. Nevertheless, they can still be covering maps, and indeed they are whenever the Euler characteristic of $\f O_1$ and $\f O_2$ is zero. 


This paper is structured as follows. In Section 2, we review basic 
definitions and results related to branched covering maps, orbifolds, and  maps between orbifolds.  
In Section 3, we present several characterizations of solutions to \eqref{fu} satisfying the condition that $p$ and $q$ have no non-trivial common compositional right factor, which we call {\it reduced} solutions. In particular, we prove a topological version of Abhyankar's lemma, which is usually formulated 
in terms of field extensions.

It is evident that the study of solutions to \eqref{fu} reduces to the analysis of reduced solutions. However, these solutions form a very broad class, and the number of general results concerning them is limited. Much more explicit results can be obtained by imposing the additional condition $\deg p = \deg g$. We call such solutions to \eqref{fu} \textit{good} and examine them in Section 4. In particular, we prove Theorem \ref{t1}.

In Section 5, we study solutions to the semiconjugacy equation \eqref{se}. First, we show how an arbitrary solution can be reduced to a solution where $p$ and $q$ do not have a non-trivial common compositional right factor. We call such solutions to \eqref{se} {\it primitive}. Then, using Theorem \ref{t1} and properties of minimal holomorphic maps between orbifolds, we prove Theorems \ref{t2} and \ref{t3}. We also explore the connection between Theorem \ref{t2} and the problem  studied in \cite{bm}, \cite{bm2}, and \cite{lx}.

In Section 6, we prove several results about 
arbitrary solutions to \eqref{fu}. 
We first describe solutions to 
\eqref{fu} in terms of fiber products and the diagonal monodromy action. We then discuss branched covering maps that are Galois coverings, and the process of normalization, which corresponds to the passage from a function field to its Galois closure in the algebro-geometric setting.

 Finally, we prove  in the setting of 
branched covering maps an analogue of Fried's result concerning the 
reducibility of algebraic curves of the form \eqref{cu}, where $f$ and $g$ are rational functions.

\section{\la{prlm} Preliminaries}
 
\subsection{ Branched covering maps and orbifolds} 
We begin by recalling the definition of a branched covering map (for more  details, see \cite{bm}). 
A continuous  map $f:R\to C$ between surfaces is called a \emph{branched
covering map} if every point $y\in C$ has an open
neighborhood $V$ such that
\[
f^{-1}(V)=\bigsqcup_{i\in I}U_i,
\]
where $I$ is nonempty and the sets $U_i$ are pairwise
disjoint open subsets of $R$, satisfying the following
conditions. Each $U_i$ contains exactly one point
$x_i\in f^{-1}\{y\}$, and there exist orientation-preserving
homeomorphisms
\[
\varphi_i:U_i\to\mathbb D,\qquad
\psi_i:V\to\mathbb D,
\]
where $\mathbb D$ is the open unit disk, such that
\[
\varphi_i(x_i)=\psi_i(y)=0,\qquad
(\psi_i\circ f\circ\varphi_i^{-1})(z)=z^{d_i},
\quad z\in\mathbb D,
\]
for some integer $d_i\geq1$. 

The integer $d_i$ is independent of the chosen neighborhoods
and coordinates. It is called the \emph{local degree} of
$f$ at $x_i$ and is denoted by $\deg_{x_i}f$.
A point $x\in R$ is called a \emph{critical point} of $f$
if $\deg_x f\geq2$, and its image $f(x)$ is called
a \emph{critical value}.
Every branched covering map $f:R\to C$ is surjective, open, and discrete. If \( f: R \to C \) is a branched covering map between surfaces, then for any conformal structure on \( C \) there exists a unique conformal structure on \( R \) such that \( f \) becomes holomorphic. This fact allows us to derive many results for branched covering maps from their holomorphic counterparts.

A branched covering map \( f: R \to C \) between {\it closed} surfaces is finite-to-one, and the set of its critical values is finite. We denote this set by \( S_f \). If \( S \) is a finite set containing \( S_f \), then the restriction
$
f: R \setminus f^{-1}(S) \longrightarrow C \setminus S
$
is an ordinary finite covering map. Conversely, if $C$ is a closed surface, $S \subset C$ is finite, and $f: R' \to C \setminus S$ is a finite covering map, where $R' = R \setminus P$ for some closed surface $R$ and finite set $P \subset R$, then $f$ extends uniquely to a branched covering map $f: R \to C$ with critical values contained in $S$ (see,  e.g., \cite{kho}). In view of this correspondence, we will use the same symbol $f$ for both the branched covering map and its restriction, which is a covering map, specifying the domain and codomain when necessary.

Recall that {\it a universal covering} of an orbifold $\mathcal{O}=(R,\nu)$ is a covering map between orbifolds $\theta_{\mathcal{O}} : \tilde{\mathcal{O}} \to \mathcal{O}$, where $\tilde{\mathcal{O}}=(\t R,\t \nu)$ is an orbifold such that $\tilde{R}$ is simply connected and $\t \nu(z)\equiv 1$.  A universal covering exists and is unique up to an orientation-preserving homeomorphism of $\tilde{R}$, unless $R = S^2$ with a single ramified point, or $R = S^2$ with two ramified points $z_1, z_2$ such that $\nu(z_1) \neq \nu(z_2)$. We say that an orbifold $\mathcal{O}$ is good if it has a universal covering, and bad otherwise. 

We assume that all orbifolds considered in this paper are good.
In particular, the orbifolds $\f O_1^f$ and $\f O_2^f$ defined in
the introduction are good: the proof of \cite[Lemma~4.2]{semi}
carries over to the topological setting.

Now let us make some comments on the definition of a minimal holomorphic map from the introduction.
Assume for a moment that \( R_1 \) and \( R_2 \) are Riemann surfaces 
and let  \( f: R_1 \to R_2 \) be a holomorphic branched covering map. 
Furthermore, let  \( \f O_1 = (R_1, \nu_1) \) and \( \f O_2 = (R_2, \nu_2) \) be orbifolds, and \(\theta_1: \t{ R}_1 \rightarrow R_1 \) and \linebreak \(\theta_2: \t{ R}_2 \rightarrow R_2 \) their universal coverings. 
It is easy to see that if $f:\f O_1\rightarrow \f O_2$ is a covering map, as defined by equality \eqref{us}, then $f$ can be lifted to a conformal isomorphism $\t f$ which makes the diagram 
\be  \l{dsa} 
\begin{CD}
\t R_1 @>\t f>> \t R_2\\
@VV  \theta_1 V @VV  \theta_2 V\\ 
R_1 @>f >> R_2 
\end{CD}
\ee
commutative. 
Along with condition \eqref{us}, 
it is useful to consider the weaker condition that 
\be \l{uuss} 
\nu_2(f(z)) \mid \nu_1(z) \deg_z f 
\ee for any \( z \in R_1 \). In this case, we say that \( f: \f O_1 \to \f O_2 \) is a {\it holomorphic map} between orbifolds. For such a map, it is still possible to lift $f$ to some map $\t f$ making the diagram \eqref{dsa} commutative, which is, however, merely holomorphic and not necessarily an isomorphism. This explains the term ``holomorphic map'' between orbifolds, and we adopt this definition, even though our setting is purely topological.

Note that in the construction of an orbifold associated with a postcritically finite rational map due to Thurston, the condition 
\be 
 \nu_1(z) \deg_z f \mid \nu_2(f(z))
\ee
opposite to \eqref{uuss} appears. 
Correspondingly, this condition ensures that one can lift instead of $f$ a branch of $f^{-1}.$

Returning to the topological setting, assume that \( f: R_1 \to R_2 \) is a branched covering map between closed surfaces where \( R_2 \) is equipped with a ramification function \( \nu_2 \). Then defining a ramification function \( \nu_1 \) on \( R_1 \) such that \( f \) becomes a holomorphic map between the orbifolds \( \f O_1 = (R_1, \nu_1) \) and \( \f O_2 = (R_2, \nu_2) \) requires satisfying condition \eqref{uuss}, and it is evident that for any \( z \in R_1 \), the minimal possible value of \( \nu_1(z) \) is given by equality \eqref{rys}, which justifies the term ``minimal holomorphic map.''

It follows from the definition of a minimal holomorphic map that for any branched covering map $f\colon R' \to R$ between closed surfaces and for any orbifold $\f O = (R, \nu)$, there exists a unique orbifold structure $\nu'$ on $R'$ such that $f$ becomes a minimal holomorphic map between the orbifolds $\f O' = (R', \nu')$ and $\f O = (R, \nu)$. We denote the corresponding orbifold $\f O'$ by $f^*\f O$. 
Note that for any minimal holomorphic map $f\colon \f O_1 \to \f O_2$, the equality \be \l{zzxx} \f O_1 = f^*(\f O_2) \ee holds simply by definition. In particular, equality \eqref{zzxx} holds for any covering map between orbifolds $f\colon \f O_1 \to \f O_2$.

Minimal holomorphic maps between orbifolds possess the following fundamental property (see \cite{semi}, Theorem 4.1). Although it was originally proved in the holomorphic setting, the proof is based on local calculations and carries over verbatim to the topological setting. 

\bt \l{serrr} Let $f:\, R_1 \rightarrow R$ and $g:\, R \rightarrow R_2$ be  branched covering maps  between closed surfaces, and  $\f O=(R_2,\nu)$ an orbifold. 
Then 
$$(g\circ f)^*(\f O)= f^*(g^*(\f O)).\eqno{\Box}$$
\et

Theorem \ref{serrr} implies in particular the following corollaries (see   \cite{semi}, Corol- \linebreak lary 4.1 and Corollary 4.2).

\bc \l{serka0}   Let $f:\, R_1 \rightarrow R$ and $g:\, R \rightarrow R_2$ be  branched covering maps between closed surfaces, and  $\f O_1=(R_1,\nu_1)$, $\f O_2=(R_2,\nu_2)$, and $\f O=(R,\nu)$ 
orbifolds. Assume that  $f:\, \f O_1\rightarrow \f O$ and $g:\, \f O\rightarrow \f O_2$ are minimal holomorphic maps (resp. covering maps) between orbifolds.
Then  $g\circ f:\, \f O_1\rightarrow \f O_2$ is  a minimal holomorphic map (resp. covering map)  between orbifolds. \qed
\ec

\bc \l{indu2}  Let $f:\, R_1 \rightarrow R$ and $g:\, R \rightarrow R_2$ be  branched covering maps  between closed surfaces, and  $\f O_1=(R_1,\nu_1)$ and  $\f O_2=(R_2,\nu_2)$
orbifolds. Assume that  $g\circ f:\, \f O_1\rightarrow \f O_2$ is  a minimal holomorphic map between orbifolds (resp. a co\-vering map). Then  $g:\, g^*(\f O_2)\rightarrow \f O_2$  and  $f:\, \f O_1\rightarrow g^*(\f O_2) $ are minimal holomorphic maps (resp. covering maps)  between orbifolds. \qed
\ec

If \( f: R_1 \to R_2 \) is a branched covering map between closed surfaces and \linebreak \( \f O_1 = (R_1, \nu_1) \) and \( \f O_2 = (R_2, \nu_2) \) are orbifolds
such that \( f: \f O_1 \to \f O_2 \) is a covering map between orbifolds, 
then the Riemann-Hurwitz formula implies that
\be \l{ner1} 
\chi(\f O_1) = d \chi(\f O_2),
\ee
where \( d = \deg f \). More generally, if \( f: \f O_1 \to \f O_2 \) is a holomorphic map  between orbifolds, 
then
\be \l{ner2}
\chi(\f O_1) \leq \chi(\f O_2) \deg f,
\ee
and equality is attained if and only if \( f: \f O_1 \to \f O_2 \) is a covering map between orbifolds.  The last  statement was proved in \cite{semi}, Proposition 3.2, for holomorphic branched covering maps, but its proof is purely topological and therefore remains valid for arbitrary branched covering maps.

Note that if $\f O=(R,\nu)$ is an orbifold and $f:R\rightarrow R$ is a branched self-covering map of degree at least two such that \( f: \f O \to \f O \) is a covering map between orbifolds, then \eqref{ner1} implies that $\chi(\f O)=0$, whereas \eqref{ner2} implies that if \( f: \f O \to \f O \) is a holomorphic map between orbifolds, then $\chi(\f O)\geq 0$.

\subsection{Branched covering maps and fundamental groups} 
From now on, all surfaces are assumed to be closed, and
instead of ``a branched covering map \linebreak $h:R\to C$ between
closed surfaces'', we will usually simply write
``a branched covering map $h:R\to C$''.

Let $C$ be a surface, $S \subset C$ a finite set, and $z_0$ a point in $C \setminus S$. Recall that for any branched covering map $h: R \to C$  that is unramified outside $S$, and a point $e \in h^{-1}\{z_0\}$, the homomorphism of the fundamental groups 
$$h_{\star}: \pi_1(R \setminus h^{-1}(S), e) \rightarrow \pi_1(C \setminus S, z_0)$$ 
is a monomorphism whose image $\Gamma_{h,e}$ is a subgroup of finite index in $\pi_1(C \setminus S, z_0)$. Conversely, if $\Gamma$ is a subgroup of finite index in $\pi_1(C \setminus S, z_0)$, then there exist a surface $R$, a branched covering map $h: R \rightarrow C$, and a point $e \in h^{-1}\{z_0\}$ such that 
$$h_{\star}(\pi_1(R \setminus h^{-1}(S), e)) = \Gamma.$$  
Furthermore, this correspondence descends to a one-to-one correspondence between conjugacy classes of subgroups of index $d$ in $\pi_1(C \setminus S, z_0)$ and equivalence classes of branched covering maps of degree $d$ unramified outside $S$, where two maps \linebreak $h: R \rightarrow C$ and $\tilde{h}: \tilde{R} \rightarrow C$ are considered equivalent if there exists a homeomorphism $w: R \rightarrow \tilde{R}$ such that $h = \tilde{h} \circ w$. To distinguish this equivalence relation from the one defined in the introduction, we will always use the symbol $\sim$ when referring to the latter.  

If $h:R\to C$ and $f:C_1\to C$ are branched covering
maps and $e\in h^{-1}\{z_0\}$, $c\in f^{-1}\{z_0\}$
are points, then the inclusion
$\Gamma_{h,e}\subseteq\Gamma_{f,c}$ holds if and only if
there exists a branched covering map $p:R\to C_1$
such that $h=f\circ p$ with $p(e)=c$, and if such
a map exists, it is unique. All the above facts follow from the Galois correspondence between coverings of $C\setminus S$ with a marked point and subgroups of the group $\pi_1(C\setminus S, z_0)$, together with the fact that any finite covering of $C\setminus S$ extends uniquely to a branched covering of $C$ (see, e.g., \cite{kho}). 

We now translate the above description into the language of permutation groups. Let $G$ be  
a group acting transitively on a finite set $X$. We recall that a nonempty subset $B \subset X$ is called a {\it block} if for every $g \in G$, either $g(B) = B$ or $g(B) \cap B = \emptyset$.  The {\it trivial} blocks are the singletons and the whole set $X$. Given a block $B$, the set of all distinct translates
$
\mathcal{B} = \{ g(B) \mid g \in G \}
$
forms a partition of $X$ called a \emph{system of imprimitivity}. It follows from the definition that for a block $B$, the group  
\[
G_B = \{ g \in G \mid g(B) = B \}
\]
contains the stabilizer of any point $x \in B$. Furthermore, there is a bijection between blocks containing a point $x$ and subgroups containing $G_x$. Namely, for any subgroup $H$ with $G_x \le H \le G$, the orbit $Hx$ is a block containing $x$ (see, e.g., \cite{mor}). 

Under the above notation, the group \( \pi_1(C \setminus S, z_0) \) naturally acts on the set \( h^{-1}\{z_0\}, \) and the corresponding permutation group is called the monodromy group of $h$. We denote this group by \( G_h \). For \( \gamma \in \pi_1(C \setminus S, z_0) \), we denote by \( \gamma_h \) the permutation corresponding to \( \gamma \) in the group \( G_h \). Since the group \( \Gamma_{h, e} \) is the stabilizer of \( e \) under the action of \( \pi_1(C \setminus S, z_0) \) on \( h^{-1}\{z_0\} \), the action of \( G_h \) on \( h^{-1}\{z_0\} \)  can be identified with  the action of \( \pi_1(C \setminus S, z_0) \) on the left cosets of \( \Gamma_{h, e} \) by left multiplication.

If a branched covering map $h: R \rightarrow C$ of degree $d$ can be decomposed into a composition $h = f \circ p$ of branched covering maps $p: R \rightarrow C_1$ and $f: C_1 \rightarrow C$ of degrees at least $2$, then the group $G_{h}$ has a  system of imprimitivity $\Omega_f$ consisting of $d_1 = \deg f$ blocks such that the action of $G_h$ on the blocks of $\Omega_f$ is isomorphic to $G_f$. Specifically, the blocks of $\Omega_f$ have the form $$\mathcal{B}_i = p^{-1}\{t_i\},\qquad 1 \leq i \leq d_1,$$ where 
$$\{t_1,t_2,\dots, t_{d_1}\} = f^{-1}\{z_0\}.$$ 
Moreover, each system of imprimitivity  $\Omega$ of $G_h$ consisting of non-trivial blocks has such a form. Indeed, the stabilizer of a block containing $e \in h^{-1}\{z_0\}$ gives a subgroup of $\pi_1(C \setminus S, z_0)$ of the form $\Gamma_{f,c}$ containing $\Gamma_{h,e}$. Thus, $h = f \circ p$ for some branched covering map $p$, and one can easily see that the blocks of $\Omega$ are precisely the fibers $p^{-1}\{t_i\}$ over the points $t_i \in f^{-1}\{z_0\}$.

\section{Reduced solutions of \( f \circ p = g \circ q \)}
\subsection{A characterization of reduced solutions}
In this section, we prove several results about branched covering maps  that make the diagram 
\be \l{dia} 
\begin{CD}
R @>q>> C_2\\
@VV p V @VV g V\\ 
C_1 @>f >> C 
\end{CD}
\ee
commutative, under the condition that $p$ and $q$ have no non-trivial common compositional right factor, as defined in the introduction.   
We will refer to such quadruples $f,g,p,q$ as {\it reduced} solutions of \eqref{dia}, using the terminology of \cite{lo} originally introduced for holomorphic maps between compact Riemann surfaces.   
Notice that the commutativity of diagram \eqref{dia}   
implies that for every $t_1 \in C_1$, the map $q$ sends the fiber $p^{-1}\{t_1\}$ to the fiber $g^{-1}\{f(t_1)\}$, regardless of the nature of the maps $f, g, p, q$.

In the case where $p: R \to C_1$ and $q: R \to C_2$ are holomorphic maps between compact Riemann surfaces, the condition that $p$ and $q$ have no non-trivial common compositional right factor is equivalent to the requirement that the compositum of the subfields $p^*\f M(C_1)$ and $q^*\f M(C_2)$ of $\f M(R)$ is the whole field $\f M(R)$, where $\f M(T)$ denotes the field of meromorphic functions on a compact Riemann surface $T$. This equivalence has no analogue for arbitrary branched covering maps $p$ and $q$. However, if there exist branched covering maps $f$ and $g$ that make diagram \eqref{dia} commutative, then the property that $p$ and $q$ have no non-trivial common compositional right factor can be described in topological terms as follows. 

Set  
\be \l{ic} h = f \circ p = g \circ q.\ee 
For a point $t_0 \in R \setminus h^{-1}(S_h)$,   consider the groups $\Gamma_{f,p(t_0)}$, $\Gamma_{g,q(t_0)}$, and $\Gamma_{h,t_0}$. Clearly, we have 
\be \l{aqe} \Gamma_{h,t_0}\subseteq \Gamma_{f,p(t_0)}\cap \Gamma_{g,q(t_0)}.\ee 

\bp \l{l1} Let $f,p,g,q$ be branched covering maps between closed surfaces such that diagram \eqref{dia} commutes, and let $h=f\circ p=g\circ q$. Then the following conditions are equivalent. 

\begin{enumerate}[label=\arabic*\textup{)}]
\item The maps $p$ and $q$ have no non-trivial common compositional right factor. 
\item For all $t_0\in R\setminus h^{-1}(S_h)$, the equality \be \l{eqa} \Gamma_{f,p(t_0)}\cap \Gamma_{g,q(t_0)}= \Gamma_{h,t_0}\ee holds. 
\item For all $t_1\in C_1\setminus f^{-1}(S_h)$, the map $q$ sends the fiber $p^{-1}\{t_1\}$ to the fiber $g^{-1}\{f(t_1)\}$ injectively. 
\item For all $t_0\in R\setminus h^{-1}(S_h)$, the equality $$p^{-1}\{p(t_0)\}\cap q^{-1}\{q(t_0)\}=\{t_0\}$$ holds. 
\end{enumerate}  
\ep 

\pr 
The equivalence $1) \Leftrightarrow 2)$ follows from the Galois correspondence between coverings of $C\setminus S_h$ with a marked point  $z_0 = h(t_0)$ and subgroups of the group $\pi_1(C\setminus S_h, z_0)$.  

The equivalence $3) \Leftrightarrow 4)$ is clear, since for  $t$ satisfying $p(t) = p(t_0)$ the condition 
$q(t) = q(t_0)
$ 
is equivalent to the condition that $t$ belongs to the set 
\[
T = \mathcal{P}_{t_0}\cap \mathcal{Q}_{t_0},
\] 
where 
\[
\f P_{t_0} = p^{-1}\{p(t_0)\} \quad \text{and} \quad \f Q_{t_0} = q^{-1}\{q(t_0)\}.
\]  

Let us prove the equivalence $2) \Leftrightarrow 4)$.   
Let $\Gamma$ be the subgroup of $\pi_1(C\setminus S_h,z_0)$ which stabilizes $T$.   
Since  $\f P_{t_0}$ and $\f Q_{t_0}$
are blocks with respect to the action of $\pi_1(C\setminus S_h,z_0)$, and 
$\Gamma_{f, p(t_0)}$ and $\Gamma_{g, q(t_0)}$ are subgroups of $\pi_1(C\setminus S_h,z_0)$ which stabilize $\f P_{t_0}$ and $\f Q_{t_0}$, we have  
$$\Gamma= \Gamma_{f,p(t_0)}\cap \Gamma_{g,q(t_0)}.$$ 
Thus, to complete the proof, it is sufficient to show that \( \Gamma_{h,t_0} = \Gamma \) if and only if \( T = \{ t_0 \} \). 

Assume that \( T \neq \{ t_0 \} \).   
Since \( \pi_1(C \setminus S_h, z_0) \) acts transitively on \( h^{-1}\{z_0\} \), for any \( t_1 \in T \) distinct from \( t_0 \), there exists \( \gamma \in \pi_1(C \setminus S_h, z_0) \) such that \( \gamma_h(t_0) = t_1 \). Such a \( \gamma \) does not belong to \( \Gamma_{h,t_0} \), but stabilizes both \( \mathcal{P}_{t_0} \) and \( \mathcal{Q}_{t_0} \) and hence belongs to \( \Gamma \). Thus, \( \Gamma_{h,t_0} \neq \Gamma \).

On the other hand, if \( \Gamma_{h,t_0}\neq \Gamma \), then any element of \( \Gamma \setminus \Gamma_{h,t_0} \) maps \( t_0 \) to some \( t \in T \) distinct from \( t_0 \). Therefore, in this case \( T \) cannot consist of \( t_0 \) alone. \qed 

\subsection{Generalized Abhyankar's lemma} 
For a point \( t_0 \in R \setminus h^{-1}(S_h) \) and \linebreak \( \gamma \in \pi_1(C \setminus S_h, z_0) \), where \( z_0 = h(t_0) \), define \( \ord_{t_0}\gamma_h \) as the smallest integer \( k \geq 1 \) such that \( (\gamma_h)^k(t_0) = t_0 \). 
Similarly, for \( t_1 \in f^{-1}(z_0) \) and   \( t_2 \in  g^{-1}(z_0) \), define \( \ord_{t_1}\gamma_f \) and \( \ord_{t_2}\gamma_g \) as the smallest integers \( k_1 \geq 1 \) and 
\( k_2 \geq 1 \)
such that \( (\gamma_f)^{k_1}(t_1) = t_1 \) and \( (\gamma_g)^{k_2}(t_2) = t_2 \), respectively. 
In this notation, the following statement holds.

\bt \l{t4} Let $f,p,g,q$ be branched covering maps between closed surfaces such that the diagram 
\be \l{buri} 
\begin{CD}
R @>q>> C_2\\
@VV p V @VV g V\\ 
C_1 @>f >> C 
\end{CD}
\ee
commutes, and $p$ and $q$ have no non-trivial common compositional right factor. Then for the branched covering map $h=f\circ p=g\circ q$, the equality  
\be \l{abj0} \ord_{t_0}\gamma_h={\rm lcm} \big(\ord_{p(t_0)}\gamma_f, \ord_{q(t_0)}\gamma_g\big) \ee 
holds for all $t_0\in R \setminus h^{-1}(S_h)$ and $\gamma\in \pi_1(C \setminus S_h, z_0)$, where $z_0=h(t_0)$.
\et

\pr  
Since the actions of the groups \( G_f \), \( G_g \), and \( G_h \) on \( f^{-1}\{z_0\} \), \( g^{-1}\{z_0\} \), and \( h^{-1}\{z_0\} \) can be identified with the action of \( \pi_1(C \setminus S_h, z_0) \) on the left cosets of \( \pi_1(C \setminus S_h, z_0) \) by   \( \Gamma_{f, p(t_0)} \), \( \Gamma_{g, q(t_0)} \), and \( \Gamma_{h, t_0} \) via left multiplication, the statement of the theorem is equivalent to the following.   
For $\gamma\in\pi_1(C\setminus S_h,z_0)$, let $k,k_1,k_2$
be the least positive integers such that
$\gamma^k\in\Gamma_{h,t_0}$,
$\gamma^{k_1}\in\Gamma_{f,p(t_0)}$, and
$\gamma^{k_2}\in\Gamma_{g,q(t_0)}$, respectively.
Then $k=\lcm(k_1,k_2)$.  
This last statement follows from the equivalence $1) \Leftrightarrow 2)$ in Proposition \ref{l1}. 
\qed 

In the case where the  maps involved are holomorphic maps between compact Riemann surfaces, the corollary of Theorem \ref{t4} presented below is known as Abhyankar's lemma. It can be expressed in terms of algebraic curves, or equivalently, in terms of field extensions (see \cite{sti}, Theorem 3.9.1). 
Since any non-constant holomorphic map between compact
Riemann surfaces is a branched covering map, the topological proof of Theorem \ref{t4} given above provides a proof of Abhyankar's lemma over \( \mathbb{C} \).

\bc \l{co1} Let $f,p,g,q$ be branched covering maps between closed surfaces such that the diagram \eqref{buri} commutes, and $p$ and $q$ have no non-trivial common compositional right factor. Then for the branched covering map $h=f\circ p=g\circ q$, the equality  
\be \l{abj} \deg_{t}h={\rm lcm} \big(\deg_{p(t)}f, \deg_{q(t)}g\big) \ee 
holds for all $t\in R$. Equivalently, 
\be \l{abj+} \gcd(\deg_{t}p,\deg_{t}q)=1\ee  for all $t\in R$.   In particular, $S_h=S_f\cup S_g$.
\ec

\pr  To prove \eqref{abj} it suffices to observe that \( \deg_{p(t)}f \), \( \deg_{q(t)}g \), and \( \deg_{t}h \) equal, respectively, \( \ord_{p(t_0)}\gamma_f \), \( \ord_{q(t_0)}\gamma_g \), and \( \ord_{t_0}\gamma_h \), where $t_0$ is a point in  $R\setminus h^{-1}(S_h)$ close enough to $t$ and \( \gamma \in \pi_1(C \setminus S_h, h(t_0)) \) is a small loop around \( z = h(t) \).

The equivalence of \eqref{abj} and \eqref{abj+} follows from the equality 
$$
\deg_{t} h=\deg_{t} q \cdot \deg_{q(t)} g = \deg_{t} p \cdot \deg_{p(t)} f.
$$

Finally, by the chain rule, \[S_f \cup S_g \subseteq S_h, \] and \eqref{abj} implies that \( \deg_th=1 \) unless \( h(t) \) is a critical value of \( f \) or \( g \). \qed 

\section{Good solutions of \( f \circ p = g \circ q \)}
\subsection{A characterization of good solutions}
We start by characterizing branched covering maps  $f,p,g,q$ that make diagram 
\eqref{dia} commutative and, in addition to the property that 
$p$ and $q$ have no non-trivial common compositional right factor, 
satisfy the condition $\deg p = \deg g$ or the equivalent condition  $\deg f = \deg q$.  We will refer to such 
quadruples  $f,p,g,q$ as {\it good} solutions of \eqref{fu}. This definition is consistent with the definition of good solutions of \eqref{fu} in the holomorphic setting  (see \cite{semi} and Corollary \ref{21} below), and we keep this terminology.

\bp \l{l2}  Let $f,p,g,q$ be branched covering maps between closed surfaces such that diagram \eqref{dia} commutes, and let $h=f\circ p=g\circ q$.  Then the following conditions are equivalent. 

\begin{enumerate}[label=\arabic*\textup{)}]
\item The maps $p$ and $q$ have no non-trivial common compositional right factor and $\deg g = \deg p$.   
\item For all $t_0\in R\setminus h^{-1}(S_h)$ the equalities  
\[
\Gamma_{f,p(t_0)}\cap \Gamma_{g,q(t_0)} = \Gamma_{h,t_0}
\] 
and 
\[
\Gamma_{f,p(t_0)}\Gamma_{g,q(t_0)} = \Gamma_{g,q(t_0)}\Gamma_{f,p(t_0)} = \pi_1(C\setminus S_h, h(t_0))
\] 
hold. 
\item For all $t_1\in C_1\setminus f^{-1}(S_h)$ the map $q$ sends the fiber $p^{-1}\{t_1\}$ bijectively onto the fiber $g^{-1}\{f(t_1)\}$. 
\item For all $t_1\in C_1\setminus f^{-1}(S_h)$ and $t_2\in C_2\setminus g^{-1}(S_h)$ with $f(t_1)=g(t_2)$, the intersection \be \l{inte} p^{-1}\{t_1\}\cap q^{-1}\{t_2\} \ee consists of exactly one element.
\end{enumerate}    
\ep 

\pr 
Set  
$
n = \deg f,$ $ m = \deg g,  
$  
and $z_0 = h(t_0)$ for $t_0\in R\setminus h^{-1}(S_h)$. 
Observe that $\deg g = \deg p$ is equivalent to $\deg h = mn$,  
which in turn is equivalent to  
\be \label{z0}  
[\pi_1(C \setminus S_h, z_0) : \Gamma_{h, t_0}] = nm.  
\ee  
Since \eqref{aqe} implies  
\[
[\pi_1(C \setminus S_h, z_0) : \Gamma_{f, p(t_0)} \cap \Gamma_{g, q(t_0)}]  
\leq [\pi_1(C \setminus S_h, z_0) : \Gamma_{h, t_0}],
\]  
and equality is attained if and only if \eqref{eqa} holds, it follows from Proposition \ref{l1}  
that the first condition of the proposition  is equivalent to equality \eqref{eqa} supplemented by the equality 
\be \label{z}  
[\pi_1(C \setminus S_h, z_0) : \Gamma_{f, p(t_0)} \cap \Gamma_{g, q(t_0)}] = nm.  
\ee  
Furthermore, since  
\begin{multline}  
\left[\pi_1(C \setminus S_h, z_0) : \Gamma_{f, p(t_0)} \cap \Gamma_{g, q(t_0)} \right] = \\  
\left[\pi_1(C \setminus S_h, z_0) : \Gamma_{g, q(t_0)} \right]  
\left[\Gamma_{g, q(t_0)} : \Gamma_{f, p(t_0)} \cap \Gamma_{g, q(t_0)} \right],  
\end{multline}  
equality \eqref{z} is equivalent to the equality 
\be \label{z3}  
\left[\Gamma_{g, q(t_0)} : \Gamma_{f, p(t_0)} \cap \Gamma_{g, q(t_0)} \right] = n.  
\ee  

Recall that for any subgroups \( A \) and \( B \) of finite index in a group \( G \), the inequality  
\be \label{ggh}  
\left[\langle A, B \rangle : A \right] \geq \left[ B : A \cap B \right]  
\ee  
holds, and equality is attained if and only if \( A \) and \( B \) are permutable  
(see, e.g., \cite{kur}, p. 79). Therefore,  
\begin{multline}  
n = \left[\pi_1(C \setminus S_h, z_0) : \Gamma_{f, p(t_0)} \right] \geq \\  
\left[\langle \Gamma_{f, p(t_0)}, \Gamma_{g, q(t_0)} \rangle : \Gamma_{f, p(t_0)} \right] \geq  
\left[\Gamma_{g, q(t_0)} : \Gamma_{f, p(t_0)} \cap \Gamma_{g, q(t_0)} \right],   
\end{multline}  
and hence equality \eqref{z3} holds if and only if  
\( \Gamma_{f, p(t_0)} \) and \( \Gamma_{g, q(t_0)} \) are permutable and generate  
\( \pi_1(C \setminus S_h, z_0) \). This proves \( 1) \Leftrightarrow 2) \).  

The equivalence \( 1) \Leftrightarrow 3) \) follows directly from the corresponding equivalence in Proposition \ref{l1} and $\deg g = \deg p$.

Finally, we prove \( 1) \Leftrightarrow 4) \). If \( 1) \) holds, then by Proposition \ref{l1}  the intersection \eqref{inte} contains at most one element. Therefore, the block $p^{-1}\{t_1\}$ intersects exactly $|p^{-1}\{t_1\}|$ blocks $q^{-1}\{t\}$ with $g(t)=f(t_1)$. Since there are $\deg g$ such blocks, and \[ \deg g = \deg p = |p^{-1}\{t_1\}|, \] it follows that the intersection \eqref{inte} is non-empty. Thus, \( 1) \Rightarrow 4) \).

Conversely, \( 4) \) implies by Proposition \ref{l1}  that \( p \) and \( q \) have no non-trivial common compositional right factor. Moreover, \( 4) \) implies that the total number of elements in $h^{-1}\{z_0\}$ is $nm$, so $\deg g = \deg p$. Hence, \( 4) \Rightarrow 1) \). \qed

\subsection{Proof of Theorem \ref{t1}} 
Let $f,p,g,q$ be branched covering maps  that  satisfy the conditions of Proposition \ref{l2}, and let $z_0\in C\setminus S_h$. Then any labelings of the points in the fibers $f^{-1}\{z_0\}$ and $g^{-1}\{z_0\}$ naturally induce a labeling of the points in the fiber $h^{-1}\{z_0\}$, and the permutation $\gamma_h$ can be reconstructed from the permutations $\gamma_f$ and $\gamma_g$ as follows.

For \( z_0 \in C \setminus S_h \), fix a labeling \( \{a_1, a_2, \dots, a_n\} \) of the elements of \( f^{-1}\{z_0\} \) and a labeling \( \{b_1, b_2, \dots, b_m\} \) of the elements of \( g^{-1}\{z_0\} \). Now introduce a labeling \( \{c_{i,j}\} \), \( 1 \leq i \leq n \), \( 1 \leq j \leq m \), of the elements of $h^{-1}\{z_0\}$ by setting
\be \l{per} \{c_{i,j}\}= p^{-1}\{a_i\}\cap q^{-1}\{b_j\}, \quad 1 \leq i \leq n, \quad 1 \leq j \leq m. \ee
By the fourth condition of Proposition \ref{l2}, such a labeling is well-defined. 
In this way, \( \gamma_h \) is identified with a permutation of  
\( c_{i,j} \), \( 1 \leq i \leq n \), \( 1 \leq j \leq m \).

Now for \( \gamma \in \pi_1(C\setminus S_h,z_0) \), define a permutation \( \gamma_{f,g} \) acting on the set \( c_{i,j} \), \( 1 \leq i \leq n \), \( 1 \leq j \leq m \), by setting 
\be \l{vto} 
\gamma_{f,g}(c_{i,j}) = c_{i',j'}, \quad \text{where } a_{i'} = \gamma_f(a_i), \ \ b_{j'} = \gamma_g(b_j).
\ee 

\bt \l{t5} Let $f,p,g,q$ be branched covering maps between closed surfaces such that the diagram 
\be 
\begin{CD}
R @>q>> C_2\\
@VV p V @VV g V\\ 
C_1 @>f >> C 
\end{CD}
\ee
commutes and $h = f \circ p = g \circ q$. Assume that $p$ and $q$ have no non-trivial common compositional right factor and $\deg p = \deg g$. Then for all $z_0 \in C \setminus S_h$ and \linebreak $\gamma \in \pi_1(C \setminus S_h, z_0)$ the equality $\gamma_h = \gamma_{f,g}$ holds. 
\et

\pr Since 
\[
p^{-1}\{a_i\},\ 1 \leq i \leq n, \quad \text{and} \quad q^{-1}\{b_j\},\ 1 \leq j \leq m,
\]
are blocks with respect to the action of $G_h$, the image of $c_{i,j}$ under $\gamma_h$ lies in the intersection 
\[
p^{-1}\{\gamma_f(a_i)\} \cap q^{-1}\{\gamma_g(b_j)\},
\]
which, in the above notation, consists of the single point $c_{i',j'}$.
\qed

When performing calculations, it is convenient to treat \(c_{i, j}\), \(1 \leq i \leq n\), \linebreak \(1 \leq j \leq m\), as the entries of an \(n \times m\) matrix \(M\). Then, by Theorem \ref{t5}, the action of the permutation \(\gamma_{h}\) corresponds to permuting the rows of \(M\) according to \(\gamma_{f}\) and permuting the columns of \(M\) according to \(\gamma_{g}\).

\vskip 0.2cm
\noindent{\it Proof of Theorem \ref{t1}.}
For $h=f\circ p=g\circ q$, let $z'$ be a point in
$C_1\setminus f^{-1}(S_h)$ sufficiently close to $z$, and let
$z_0=f(z')$. Then, the partition $\mu_{p,z}$ coincides with the
collection of cycle lengths of the permutation $\sigma$ of the set
$p^{-1}\{z'\}$ arising from the lift of a small loop
$\gamma'\subset C_1$ around $z$. To prove the equality
\be \label{eqq}
\mu_{p,z}=\mu_{g,f(z)}^{\deg_z f},
\ee
we consider the permutation $\sigma$ as a restriction of the
permutation $\gamma_h$ of the set $h^{-1}\{z_0\}$ to
$p^{-1}\{z'\}$ for a suitable element
$\gamma\in\pi_1(C\setminus S_h,z_0)$, and then apply
Theorem \ref{t5} to $\gamma_h$ to relate $\mu_{p,z}$ to 
$\mu_{g,f(z)}$.

Specifically, consider a small loop
$\gamma\in\pi_1(C\setminus S_h,z_0)$ that wraps $\deg_z f$ times
around the point $f(z)$. Then, the preimage
$f^{-1}(\gamma)\subset C_1$ contains a loop around $z$ homotopic
to $\gamma'$, which implies that $\sigma$ is a restriction of the
permutation $\gamma_h$ of $h^{-1}\{z_0\}$ to $p^{-1}\{z'\}$.
Since $\gamma_h=\gamma_{f,g}$ by Theorem \ref{t5} and $\gamma_f$ fixes $z'$, $\gamma_h$ 
fixes the block $p^{-1}\{z'\}$, and formulas \eqref{per} and
\eqref{vto} imply that the collection of cycle lengths of $\sigma$
coincides with the collection of cycle lengths of the permutation
$\gamma_g$ of the set $g^{-1}\{z_0\}$. On the other hand, by the
construction of $\gamma$, the permutation $\gamma_g$ is the
$\deg_z f$-th power of the permutation $\t\sigma$ of the set
$g^{-1}\{f(z')\}$ arising from the lift of a small loop
$\t\gamma\subset C$ around $f(z)$. Therefore, \eqref{eqq} holds.

Let us observe now that if
$$
\mu=(a_1,a_2,\dots,a_k)
\qquad {\rm and}\qquad
\mu^d=(b_1,b_2,\dots,b_l)
$$
are partitions of $n\geq 1$ for some $d\geq 1$, then
\be \la{zx}
\lcm(a_1,a_2,\dots,a_k)
=
\lcm(b_1,b_2,\dots,b_l)\cdot
\gcd\bigl(d,\lcm(a_1,a_2,\dots,a_k)\bigr).
\ee
To prove \eqref{zx}, fix a prime number $p$ and set
$$
e_i=v_p(a_i),\qquad 1\leq i\leq k,
\qquad
e=v_p(d),
\qquad
M=\max(e_1,e_2,\dots,e_k).
$$
By the definition of $\mu^d$, each number $a_i$ is replaced by
the number
$$
\frac{\lcm(a_i,d)}{d}
=
\frac{a_i}{\gcd(a_i,d)}
$$
taken $\gcd(a_i,d)$ times. The $p$-adic valuation of this number is
$$
v_p\left(\frac{a_i}{\gcd(a_i,d)}\right)
=
e_i-\min(e_i,e)
=
\max(0,e_i-e).
$$
Since repeating a number does not change the least common
multiple, we have
$$
v_p\bigl(\lcm(b_1,b_2,\dots,b_l)\bigr)
=
\max_{1\leq i\leq k}\max(0,e_i-e)
=
\max(0,M-e).
$$
Moreover,
$$
v_p\bigl(\gcd(d,\lcm(a_1,a_2,\dots,a_k))\bigr)
=
\min(e,M).
$$
Consequently, the $p$-adic valuation of the right-hand side of
\eqref{zx} is
$$
\max(0,M-e)+\min(e,M)=M, 
$$
which is equal to
$$
v_p\bigl(\lcm(a_1,a_2,\dots,a_k)\bigr).
$$
Since this holds for every prime number $p$, equality
\eqref{zx} follows.

Applying \eqref{zx} to $\mu=\mu_{g,f(z)}$ and $d=\deg_z f$, and using equality \eqref{eqq} alongside the definitions of the orbifolds $\mathcal O_2^p$ and $\mathcal O_2^g$, we obtain the equality
$$
\nu_2^g(f(z)) = \nu_2^p(z) \cdot \gcd\bigl(\deg_z f, \nu_2^g(f(z))\bigr),
$$
which means that $f \colon \mathcal O_2^p \to \mathcal O_2^g$ is a minimal holomorphic map between orbifolds.

Finally, to prove that
$q:{\f O_1^p}\rightarrow{\f O_1^g}$ is also a minimal
holomorphic map between orbifolds, observe that since
$f:{\f O_2^p}\rightarrow{\f O_2^g}$ is a minimal holomorphic map
and
$p:{\f O_1^p}\rightarrow{\f O_2^p}$ is a covering map, the
composition
$f\circ p:{\f O_1^p}\rightarrow{\f O_2^g}$ is a minimal
holomorphic map by Corollary \ref{serka0}. It follows now from
the equality
\be \l{itfo}
f(p(z))=g(q(z))
\ee
by Corollary \ref{indu2} that
$q:{\f O_1^p}\rightarrow g^*({\f O_2^g})$ is a minimal
holomorphic map. Since
$g^*({\f O_2^g})={\f O_1^g}$, this implies the statement.
\qed

Let us mention that Theorem \ref{t1} and Theorem \ref{t5} adapt results from the holomorphic setting, where the extra assumption $C=\C\P^1$ was imposed (see \cite{pak}, Proposition 2.1, and \cite{semi}, Lemma 2.2 and Theorem 4.2).

\section{Solutions of 
\( f \circ p = p \circ q \)}

\subsection{\l{s31} Reduction to good solutions} 
We start by showing that the problem of describing branched covering maps  that make the diagram 
\be \l{dia2} 
\begin{CD}
R @>q>> R\\
@VV p V @VV p V\\ 
S^2 @>f >> S^2
\end{CD}
\ee
commutative reduces to the case where $p$ and $q$ have no non-trivial common compositional right factor, and hence to the setting where Theorem \ref{t1} is applicable. As in the holomorphic context, we will call such solutions of \eqref{se} {\it primitive}.

We will always assume that $\deg f \geq 2$. Since $\deg q = \deg f$, we also have $\deg q \geq 2$, which implies by the Riemann--Hurwitz formula that either $R = S^2$  or $R = T^2$.

The following result is a simplified version of Theorem 3.2 in \cite{lattes}, which was established for holomorphic maps.

\bt \l{los}
Let $f,p,q$ be branched covering maps  between closed surfaces such that $\deg f\geq 2$ and 
the diagram 
\be 
\begin{CD}
R @>q>> R\\
@VV p V @VV p V\\ 
S^2 @>f >> S^2
\end{CD}
\ee
commutes. Then there exist
branched covering maps    
$\psi:R\rightarrow R,$ $p_0:R\rightarrow S^2$, $q_0: R\rightarrow R$
satisfying the following conditions.

\begin{enumerate}[label=\upshape(\alph*)]
\item 
The diagram 
\be 
\begin{CD} \l{xxuu}
R @>q>> R \\
@VV  \psi V @VV  \psi  V\\ 
R @> q_0 >> R\\
@VV p_0 V @VV p_0 V\\ 
S^2 @>f >> S^2  
\end{CD}
\ee
commutes, and $p=p_0\circ \psi$.

\item The relation $q_0\sim q$ holds.

\item The maps $p_0$, $q_0$,  and $f$  form a primitive solution of $f\circ p=p\circ q$. Moreover, $\deg p_0\geq 2$, unless $q\sim f$ and \( R=S^2 \).
\end{enumerate}
\et

\pr 
If $p$ and $q$ have no non-trivial common compositional right factor, then we can set  
$$q_0=q, \quad p_0=p, \quad \psi=\mathrm{id}.$$ 
Otherwise, there exist a closed surface $R^{\prime}$ and branched covering maps 
$$u_1:R\rightarrow R^{\prime}, \quad p^{\prime}:R^{\prime}\rightarrow S^2, \quad v_1:R^{\prime}\rightarrow R,$$
such that   
\be \l{eli} p=p^{\prime}\circ u_1, \quad q=v_1\circ u_1,\ee
and $\deg u_1 \geq 2.$ Furthermore, since $q:R\rightarrow R$ decomposes as 
$$R\overset{u_1}{\longrightarrow}R^{\prime}\overset{v_1}{\longrightarrow}R,$$
the equality $g(R^{\prime})=g(R)$ holds. So, we can assume that $R'=R$. 

By substituting \eqref{eli} into the equation \( f \circ p = p \circ q \), we see that 
\[
f \circ p' = p' \circ u_1 \circ v_1,
\]
and the diagram 
\be \l{dai} 
\begin{CD}
R @>q=v_1 \circ u_1>> R \\
@VV  u_1 V @VV  u_1  V\\
R^{\prime} @> q'=u_1 \circ v_1 >> R^{\prime}\\
@VV p^{\prime} V @VV p^{\prime} V\\
S^2 @>f >> S^2
\end{CD}
\ee 
commutes. 
If the maps \( p' \) and \(q'= u_1 \circ v_1 \) still have a non-trivial common right factor, we can apply a similar transformation again. Since \( \deg u_1 \geq 2 \) implies \( \deg p' < \deg p \), after a finite number of steps we must reach the diagram \eqref{xxuu}, where \( q_0 \) is obtained from \( q \) through a sequence of elementary transformations, and the maps \( p_0 \) and \( q_0 \) no longer have a non-trivial common compositional right factor. 

Finally, if $\deg p_0 = 1$, then $f$ is conjugate to $q_0$ by an orientation-preserving  homeomorphism, which implies that $f \sim q$ and $R=S^2$.
\qed

\subsection{Proofs of Theorems \ref{t2} and \ref{t3}}
We start by proving the following lemma.

\begin{lemma}\l{ai} 
Let $\mathcal O=(S^2,\nu)$ be an orbifold of positive Euler
characteristic, and let $p:S^2\to S^2$ be a branched covering
map such that $p:\mathcal O\to\mathcal O$ is a minimal
holomorphic map. Then, for any decomposition $p=u\circ v$
into branched covering maps $u,v:S^2\to S^2$, the orbifolds
$u^*\mathcal O$ and $\mathcal O$ have the same signature.
\end{lemma}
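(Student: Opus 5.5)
We will set $\mathcal O' = u^*\mathcal O$ and compare it with $\mathcal O$ through Euler characteristics. The plan is to show first that $\mathcal O'$ has positive Euler characteristic and is a good spherical orbifold, and then that the two orbifolds are forced to coincide in signature.

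\textbf{Step 1: the composition identity.} Since $p=u\circ v$ and $p:\mathcal O\to\mathcal O$ is minimal holomorphic, we have $\mathcal O = p^*\mathcal O$. Theorem \ref{serrr} then gives
\begin{equation*}
\mathcal O = v^*(u^*\mathcal O) = v^*\mathcal O'.
\end{equation*}
So $v:\mathcal O\to\mathcal O'$ and $u:\mathcal O'\to\mathcal O$ are both minimal holomorphic maps, and in particular holomorphic maps between orbifolds.

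\textbf{Step 2: positivity of both Euler characteristics.} Applying \eqref{ner2} to $u:\mathcal O'\to\mathcal O$ and to $v:\mathcal O\to\mathcal O'$ gives
\begin{equation*}
\chi(\mathcal O')\le \chi(\mathcal O)\deg u, \qquad \chi(\mathcal O)\le \chi(\mathcal O')\deg v.
\end{equation*}
The second inequality together with $\chi(\mathcal O)>0$ yields $\chi(\mathcal O')>0$. (This uses that \eqref{ner2} holds for minimal holomorphic maps regardless of whether the target orbifold is good, which is fine since it is a Riemann--Hurwitz computation.)

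\textbf{Step 3: the signatures of $\mathcal O$ and $\mathcal O'$.} A spherical orbifold of positive Euler characteristic has one of the following signatures: non-ramified, $(n)$, $(n,m)$, $(2,2,n)$, $(2,3,3)$, $(2,3,4)$, $(2,3,5)$. Since $\mathcal O$ is good, it is non-ramified, $(n,n)$, $(2,2,n)$, or one of the three triples. We must show $\nu(\mathcal O')=\nu(\mathcal O)$. The key relation is that the lcm of ramification values, and the orders, behave monotonically under minimal holomorphic maps. From \eqref{rys} applied to $v$, each $\nu(z)$ for $z\in S^2$ divides $\nu'(v(z))$, so every ramified point of $\mathcal O$ lies over a ramified point of $\mathcal O'$ with a divisible index. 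Likewise, from \eqref{rys} applied to $u$, each $\nu'(y)$ divides $\nu(u(y))$.

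To turn these divisibilities into an equality of signatures, we will iterate. The composition $v\circ u:\mathcal O'\to\mathcal O'$ is minimal holomorphic by Corollary \ref{serka0}, since $v^*\mathcal O' = \mathcal O$ and $u^*\mathcal O=\mathcal O'$. Thus $\mathcal O'$ is also invariant under an elementary transformation of $p$. The cleanest route is then to compare the finite ``spherical group orders'': for a spherical orbifold with $\chi>0$, define $N=2/\chi$, which is the order of the corresponding orbifold group in the good case. We want to show $\chi(\mathcal O)=\chi(\mathcal O')$ and that equality of $\chi$ together with the divisibility relations forces the same signature.

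\textbf{The main obstacle} is exactly this step, because \eqref{ner2} only bounds $\chi$ multiplicatively by the degree. I would handle it by a case analysis. The two-way divisibility relations, pushed along the points $c(\mathcal O)$ and $c(\mathcal O')$, show that both orbifolds have the same maximal index $\max\nu$ and the same number of ramified points, up to points of index dividing into others. We first show that the number of ramified points of $\mathcal O'$ is at most that of $\mathcal O$: every ramified $y$ maps under $u$ to a ramified point. We then show the reverse: every ramified point of $\mathcal O$ lies over, via $v$, a ramified point of $\mathcal O'$. Together with the maxima agreeing, this makes the lists $(n,n)$, $(2,2,n)$, $(2,3,3)$, $(2,3,4)$, $(2,3,5)$ distinguishable. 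A residual bad case $(n,m)$ with $n\neq m$ for $\mathcal O'$ must be excluded by reapplying the divisibility relations in both directions.
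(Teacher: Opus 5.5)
Your Steps 1 and 2 are correct: $\mathcal O=p^*\mathcal O=v^*(u^*\mathcal O)$ follows from \eqref{zzxx} and Theorem \ref{serrr}, and \eqref{ner2} applied to $v$ gives $\chi(u^*\mathcal O)>0$. The gap is Step 3, which is where all the content of the lemma lies.

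\textbf{Why Step 3 fails.} The pointwise relations $\nu(z)\mid\nu'(v(z))$ and $\nu'(y)\mid\nu(u(y))$ from \eqref{rys} only give $v(c(\mathcal O))\subseteq c(\mathcal O')$ and $u(c(\mathcal O'))\subseteq c(\mathcal O)$ with compatible divisibilities. Hence maxima and lcm's agree. They say nothing about $|c(\mathcal O)|$ versus $|c(\mathcal O')|$, because neither $u$ nor $v$ need be injective on cone points. For example, $z\mapsto\frac12(z+z^{-1})$ is a covering from the orbifold with cone points of order $n$ at $0,\infty$ onto the one of signature $(2,2,n)$, and it sends both cone points to $\infty$.

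In fact no argument built only from these relations can work. Consider the pairs of signatures $(2,2,2)$ and $(2,2)$; $(2,2,n)$ and $(n,n)$ with $n$ even; and $\mathcal O=(4,4)$, $\mathcal O'=(2,4)$. Each pair admits assignments of cone points to cone points satisfying all your relations, with equal maxima and lcm's, yet the signatures differ. The last pair shows that the bad case for $\mathcal O'$ cannot be excluded ``by reapplying the divisibility relations'' either. Finally, nothing in your plan yields $\chi(\mathcal O)=\chi(\mathcal O')$, since \eqref{ner2} only gives bounds involving $\deg u$ and $\deg v$.

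\textbf{What is missing} is a global input. These configurations are excluded only by using the exact equality \eqref{rys} summed over whole fibers. For instance, suppose $v^*\mathcal O'=\mathcal O$ with signatures $(2,2,2)$ and $(2,2)$. Over each cone point of $\mathcal O'$, $\deg v$ is congruent modulo $2$ to the number of cone points of $\mathcal O$ in that fiber. Three points cannot be split between two fibers with equal parity, so this is impossible.

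A systematic topological route runs as follows.
\begin{enumerate}
\item Form the fiber product of $u$ with the Galois universal covering $\theta$ of $\mathcal O$.
\item By Corollary \ref{co1}, its components are Galois coverings $p_j$ with $\mathcal O_2^{p_j}=u^*\mathcal O$. Their deck groups are isomorphic to subgroups of $\operatorname{Aut}(S^2,\theta)$.
\item Since $\chi(u^*\mathcal O)>0$, the source of $p_j$ is a sphere. So $u^*\mathcal O$ is good, and its orbifold group embeds in that of $\mathcal O$.
\item Repeating this with $v$ gives the reverse embedding. The two finite groups are therefore isomorphic.
\item The isomorphism type among $C_n$, $D_n$, $A_4$, $S_4$, $A_5$ determines the signature.
\end{enumerate}

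The paper avoids all of this. It realizes $u$ and $v$ as rational functions via \cite[Corollary~A.12]{bm}. It then normalizes by a M\"obius map so that the homeomorphisms agree on $c(\mathcal O)$, which is possible since $|c(\mathcal O)|\le 3$. Finally it quotes the holomorphic statement \cite[Corollary~5.1]{semi}.
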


\begin{proof}
In the holomorphic setting the lemma was proved in
\cite[Corollary~5.1]{semi} and the general case can be reduced to the holomorphic case as follows. 
Choose an orientation-preserving homeomorphism
$h_0:S^2\to\mathbb{CP}^1$. Applying
\cite[Corollary~A.12]{bm} first to $u$ and then
to $v$, we obtain orientation-preserving homeomorphisms
$h_1,h_2:S^2\to\mathbb{CP}^1$ and rational
functions $U,V$ such that the following diagram commutes:
\[
\begin{CD}
S^2 @>v>> S^2 @>u>> S^2\\
@VV h_2 V @VV h_1 V @VV h_0 V\\
\mathbb{CP}^1 @>V>> \mathbb{CP}^1 @>U>> \mathbb{CP}^1
\end{CD}
\]
Moreover, since $\chi(\mathcal O)>0$, the set $c(\mathcal O)$
contains at most three points. Hence there exists a
M\"obius transformation $M$ such that
\[
M(h_2(z))=h_0(z),\qquad z\in c(\mathcal O).
\]
Replacing $h_2$ by $M\circ h_2$ and $V$ by $V\circ M^{-1}$,
we may therefore assume that $h_2$ and $h_0$ coincide on
$c(\mathcal O)$.

Set
\[
\mathcal O'=(\mathbb{CP}^1,\nu\circ h_0^{-1}).
\]
Since $h_2$ and $h_0$ coincide on $c(\mathcal O)$, we have
\[
\nu\circ h_2^{-1}=\nu\circ h_0^{-1}.
\]
Therefore,
\[
U\circ V=h_0\circ p\circ h_2^{-1}
\]
is a minimal holomorphic self-map of $\mathcal O'$.
Applying the holomorphic version of the lemma, we conclude
that $U^*\mathcal O'$ and $\mathcal O'$ have the same signature.

Furthermore, the commutativity of the diagram gives
\[
h_1^*(U^*\mathcal O')
=u^*(h_0^*\mathcal O')
=u^*\mathcal O.
\]
Thus, since $h_1$ is a homeomorphism, $u^*\mathcal O$ has the same
signature as $U^*\mathcal O'$, and hence as $\mathcal O'$.
Finally, $\mathcal O'$ and $\mathcal O$ have the same signature
because $h_0$ is a homeomorphism. This proves the lemma.
\end{proof}

\noindent{\it Proof of Theorem \ref{t3}.}
Assuming that $f\not\sim q$, consider the lower square in
diagram \eqref{xxuu} from Theorem \ref{los}. Applying
Theorem \ref{t1}, we see that
$q_0:\f O_1^{p_0}\rightarrow \f O_1^{p_0}$ and
$f:\f O_2^{p_0}\rightarrow \f O_2^{p_0}$ are minimal
holomorphic maps between orbifolds, implying by
\eqref{ner2} that $\chi(\f O_1^{p_0})\geq 0$ and
$\chi(\f O_2^{p_0})\geq 0$. Moreover, since $f\not\sim q$
implies $\deg p_0\geq 2$, the map $p_0$ has non-trivial
branching, implying that $\f O_2^{p_0}$ is distinct from
$\f S^2$. Since
$p_0:\f O_1^{p_0}\rightarrow \f O_2^{p_0}$ is a covering
map between orbifolds, we also have
$$
\chi(\f O_1^{p_0})=\deg p_0\,\chi(\f O_2^{p_0}).
$$
Thus, either both Euler characteristics are zero, or
$$
0<\chi(\f O_2^{p_0})<\chi(\f O_1^{p_0}).
$$

In case $\deg \psi=1$, we may assume that 
$p_0=p$,  and setting $\f O_1=\f O_1^{p_0}$
and $\f O_2=\f O_2^{p_0}$, we see that 
$$
q:\f O_1\rightarrow \f O_1,\qquad
f:\f O_2\rightarrow \f O_2,\qquad
p:\f O_1\rightarrow \f O_2
$$
are minimal holomorphic maps by Theorem \ref{t1}. 
Moreover, since both orbifolds are good, if $\chi(\f O_2)>0$, then 
the collections of ramification indices of $\f O_1$ and
$\f O_2$ have the required form by the well-known
classification of orbifolds of non-negative Euler
characteristic on $S^2$.

In the general case, still setting $\f O_2=\f O_2^{p_0}$,
we apply Corollaries \ref{indu2} and \ref{serka0} to the
steps from Theorem \ref{los} in reverse order.
Specifically, using induction, it is enough to prove
the following statement for a single step described by
the diagram \eqref{dai}: if $\f O$ and $\f O_2$ are
orbifolds of non-negative Euler characteristic such that
$$
f:\f O_2\rightarrow \f O_2,\qquad
p':\f O\rightarrow \f O_2,\qquad
u_1\circ v_1:\f O\rightarrow \f O
$$
are minimal holomorphic maps between orbifolds, then
there exists an orbifold $\f O_1$ with
$\chi(\f O_1)=\chi(\f O)$ such that
\be \l{sum}
p'\circ u_1:\f O_1\rightarrow \f O_2
\quad {\rm and}\quad
v_1\circ u_1:\f O_1\rightarrow \f O_1
\ee
are minimal holomorphic maps between orbifolds.

Let us set $\f O_1=u_1^*(\f O)$. Then by
Corollary \ref{indu2},
$$
u_1:\f O_1\rightarrow \f O
\quad {\rm and}\quad
v_1:\f O\rightarrow \f O_1
$$
are minimal holomorphic maps, which implies by
Corollary \ref{serka0} that the maps \eqref{sum} are also
minimal holomorphic maps.

If $\chi(\f O)>0$, applying Lemma \ref{ai} to
$u_1\circ v_1:\f O\rightarrow \f O$, we conclude that \linebreak 
$\f O_1=u_1^*(\f O)$ and $\f O$ have the same signature
and hence the same Euler characteristic. If
$\chi(\f O)=0$, applying \eqref{ner2} to $u_1$ and $v_1$
gives
$$
\chi(\f O_1)\leq \deg u_1\,\chi(\f O)=0,
\qquad
0=\chi(\f O)\leq \deg v_1\,\chi(\f O_1).
$$
Therefore, $\chi(\f O_1)=0$. 
This finishes the proof of Theorem \ref{t3}. \qed

\vskip 0.2cm
\noindent{\it Proof of 
Theorem \ref{t2}.}
In this case, the condition $f\not\sim q$ is satisfied automatically, as $T^2$ and $S^2$ have different genera. Considering the lower square in the diagram \eqref{xxuu} from Theorem \ref{los}, we observe that since $\chi(\f T^2)=0$, the inequality $\chi(\f O_1^{p_0})\geq 0$ implies by formula \eqref{euler} that $\chi(\f O_1^{p_0})= 0$ and $\f O_1^{p_0}=\f T^2$. Since $p_0:\f O_1^{p_0}\rightarrow \f O_2^{p_0}$ is a covering map, this implies that $\chi(\f O_2^{p_0})= 0$ as well by \eqref{ner1}. Moreover, since $f:\f O_2^{p_0}\rightarrow \f O_2^{p_0}$ and $q_0:\f T^2\rightarrow \f T^2$ are minimal holomorphic maps for which equality in inequality \eqref{ner2} is attained, they are covering maps. 
Now the proof can be finished using Corollaries \ref{indu2} and \ref{serka0} in the same way as in the proof of Theorem \ref{t3}. \qed 
\vskip 0.2cm
For analogues of Theorems \ref{t2} and \ref{t3} in the holomorphic setting, we refer the reader to \cite{semi} (Theorem 6.1), \cite{dyna} (Theorem 1.1), and \cite{lattes} (Theorem 1.1).

\subsection{\l{s32} Quotients of sphere and torus endomorphisms} 
Let $f\colon S^2 \to S^2$ be a branched covering map. Recall that $f$ is called \emph{postcritically finite}, or a \emph{Thurston map}, if the forward orbit of every critical point of $f$ under iteration is finite. For a Thurston map $f\colon S^2 \to S^2$, we define its orbifold $\f O_f$ by setting $\nu(z)$ to be the least common multiple of the local degrees $\deg_x f^n$ taken over all $n \geq 1$ and all $x \in S^2$ satisfying $f^n(x) = z$. Note that these local degrees can be unbounded. This occurs if and only if $z$ belongs to a periodic cycle containing a critical point, in which case we set $\nu(z) = \infty$  (here we allow infinite ramification indices).

In this notation, the question from \cite{bm}, \cite{bm2} mentioned in the introduction asks whether every quotient of a torus endomorphism---that is, a branched covering map $f$ of degree at least two for which there exist branched covering maps $p$ and $q$ such that the diagram \eqref{d} commutes---is a Thurston map without periodic critical points satisfying $\chi(\f O_f) = 0$. It was already shown in \cite[Lemma 3.12]{bm} that such a map $f$ is indeed a Thurston map without periodic critical points. Furthermore, it was also established in \cite[Lemma 3.13]{bm} that an affirmative answer to the above question would follow if the map $p$ has the same local degree at every point of $p^{-1}\{z\}$ for each $z \in S^2$. 
On the other hand, it is easy to see that the last property of $p$ is a corollary of Theorem \ref{t2}, which thus answers the question in the affirmative. Indeed, since $p \colon \f T^2 \to \f O$ is a covering map and $\f T^2$ is unramified, the local degree at every point of $p^{-1}\{z\}$ is equal to $\nu(z)$.

Note that any covering map between orbifolds $f\colon \f O \to \f O$ is a Thurston map. Indeed, \eqref{us} implies that $f$ maps any critical point of $f$ to the set $c(\f O)$. On the other hand, \eqref{us} implies that $$f(c(\f O))\subseteq c(\f O).$$   
A minimal holomorphic map $f\colon \f O \to \f O$ is in general not a Thurston map. Nevertheless, condition \eqref{rys} still implies that $f$ maps the set $c(\f O)$ into itself. Thus, $f$ is a Thurston map whenever $S_f$ is a subset of $c(\f O)$. 

We recall that in the holomorphic setting, maps from a compact Riemann surface to $\C\P^1$  unramified outside $\{0,1,\infty\}$ are called
\emph{Belyi maps} and constitute the subject of a deep theory
known as ``dessins d'enfants'', which goes back to Grothendieck
(see, e.g., \cite{des1}). It is intriguing to explore whether a Belyi map $f:\C\P^1\rightarrow \C\P^1$ that is also a minimal holomorphic map $f\colon \f O \to \f O$ for some orbifold $\f O\neq \f S^2$ possesses any remarkable arithmetical or dynamical properties.

\section{Arbitrary solutions of the equation 
\( f \circ p = g \circ q \)}
\subsection{Fiber products} 

In this section, we prove a number of results about solutions of \eqref{fu} without additional assumptions. A natural approach here is to fix $f$ and $g$ and describe all possible $h$ satisfying 
\begin{equation}\label{ii} 
h = f \circ p = g \circ q,
\end{equation}
by utilizing the fiber product construction.

For branched covering maps $f$ and $g$ with the same target, we denote by $S_{f,g}$ the union $S_f \cup S_g$.

\bt \la{p1} Let $f:\, C_1\rightarrow C$ and $g:\, C_2\rightarrow C$ be 
branched covering maps between closed surfaces, and $z_0$ a point in $C\setminus S_{f,g}$.
Then for any $a\in f^{-1}\{z_0\}$ and 
$b\in g^{-1}\{z_0\}$, there exist 
a closed surface $R$, 
branched covering maps $p:\, R\rightarrow C_1,$ $q:\, R\rightarrow C_2$,
$h:\, R\rightarrow C,$ and a point $c\in h^{-1}\{z_0\}$ 
such that:

\begin{enumerate}[label=\upshape(\alph*)]
\item The equalities 
\be \la{f1} h=f\circ p=g\circ q, \ \ \ p(c)=a, \ \ \ q(c)=b\ee hold.
\item The maps $p$ and $q$ have no non-trivial common compositional right factor. 
\item The inequalities $$\deg q\leq \deg f, \qquad \deg p\leq \deg g$$ hold. 
\end{enumerate} 
Furthermore, for any branched covering maps $\t p:\, \t R\rightarrow C_1,$ $\t q:\, \t R\rightarrow C_2$,  $\t{h}  :\, \t R\rightarrow C,$ 
and a point $\t c\in \t{h}  ^{-1}\{z_0\}$ satisfying 
\be \la{f2} \t{h}  =f\circ \t p=g\circ \t q, \ \ \  \t p(\t c)=a, 
\ \ \ \t q(\t c)=b, \ee 
there exists a unique branched covering map $w:\, \t R\rightarrow R$ such that
\be \la{e3} \t{h}  = h\circ w, \ \ \ \t p= p\circ  w,\ \ \ \t q= q\circ w,
\ \ \ w(\t c)=c.\ee
\et

\pr Since the subgroups $\Gamma_{f,a}$ and $\Gamma_{g,b}$ are of finite index in 
$\pi_1(C\setminus S_{f,g},z_0)$, their intersection is also of finite index. Therefore, there exists 
a pair $h:\, R\rightarrow C,$ $c\in h^{-1}\{z_0\}$ such that 
\be \l{mor} \Gamma_{h,c}=\Gamma_{f,a}\cap\Gamma_{g,b},\ee 
and for such a pair condition $(a)$ holds. 
Moreover,  by Proposition  \ref{l1}, the condition \eqref{mor} implies $(b)$, while $(c)$ follows from 
\be 
\begin{split}
\deg h & = [\pi_1(C\setminus S_{f,g},z_0):\Gamma_{h,c}] =[\pi_1(C\setminus S_{f,g},z_0):\Gamma_{f,a}\cap\Gamma_{g,b}]\\
& \leq [\pi_1(C\setminus S_{f,g},z_0):\Gamma_{f,a}]\cdot [\pi_1(C\setminus S_{f,g},z_0):\Gamma_{g,b}] \\
& =\deg f \cdot \deg g.
\end{split}
\ee

Finally, assume that \eqref{f2} holds. First suppose that
$z_0\notin S_{\t h}$, and regard all covering subgroups
as subgroups of
\[
\pi_1(C\setminus(S_{f,g}\cup S_{\t h}),z_0).
\]
Adding punctures preserves the equality
\[
\Gamma_{h,c}=\Gamma_{f,a}\cap\Gamma_{g,b}.
\]
Since
\[
\Gamma_{\t h,\t c}
\subseteq\Gamma_{f,a}\cap\Gamma_{g,b}=\Gamma_{h,c},
\]
the correspondence between coverings and subgroups gives
a branched covering map $w:\t R\to R$ such that
\[
\t h=h\circ w,\qquad w(\t c)=c.
\]

It then follows from
\[
f\circ\t p=f\circ p\circ w,\qquad
g\circ\t q=g\circ q\circ w
\]
and
\[
(p\circ w)(\t c)=\t p(\t c),\qquad
(q\circ w)(\t c)=\t q(\t c)
\]
that equalities \eqref{e3} hold. 

If $z_0$ is a critical value of $\t h$, choose a path
$\lambda$ in $\t R\setminus\t h^{-1}(S_{f,g})$
from $\t c$ to a point $\t c_1$ such that
$z_1=\t h(\t c_1)$ is not a critical value of $\t h$.
Let $\lambda_0$ be the lift of $\t h\circ\lambda$
under $h$ starting at $c$, and let $c_1$ be its endpoint.
Composing $\lambda_0$ with $p$ and $q$ gives
\[
p(c_1)=\t p(\t c_1),\qquad
q(c_1)=\t q(\t c_1).
\]
Changing the basepoint along these paths gives
\[
\Gamma_{h,c_1}
=\Gamma_{f,p(c_1)}\cap\Gamma_{g,q(c_1)}.
\]
Thus, the preceding argument applied at $z_1$ gives
a branched covering map \linebreak $w:\t R\to R$ such that
\[
\t h=h\circ w,\qquad w(\t c_1)=c_1.
\]

Since $\t h=h\circ w$, we have
\[
h\circ(w\circ\lambda)=\t h\circ\lambda=h\circ\lambda_0.
\]
Thus, $w\circ\lambda$ and $\lambda_0$ are lifts of
the same path under $h$, and both end at $c_1$.
By uniqueness of path lifting applied in the reverse
direction, $w\circ\lambda=\lambda_0$.
Comparing their starting points, we obtain that the equality $w(\t c)=c$ still holds. \qed

The maps $h,p,q$ constructed above depend in general on
the choice of the pair $(a,b)\in
f^{-1}\{z_0\}\times g^{-1}\{z_0\}$.
To describe all possible $h$ together with the maps
$p$ and $q$, we consider the following diagonal
monodromy action of $\pi_1(C\setminus S_{f,g},z_0)$.
This modifies the action introduced before the proof
of Theorem \ref{t5}. The group now acts on
$f^{-1}\{z_0\}\times g^{-1}\{z_0\}$ instead of
$h^{-1}\{z_0\}$, and the action need not be transitive.

Specifically, for $\gamma\in\pi_1(C\setminus S_{f,g},z_0)$,
denoting as above by $\gamma_f$ and $\gamma_g$ the
corresponding permutations of $f^{-1}\{z_0\}$
and $g^{-1}\{z_0\}$, we set
\[
\gamma_{f,g}\cdot(a,b)
=(\gamma_f(a),\gamma_g(b)).
\]
\bt \la{p11} Let $f:\, C_1\rightarrow C$ and $g:\, C_2\rightarrow C$ be 
branched covering maps  between closed surfaces. Then there exists a collection
\be \l{nota} (C_1,f)\times(C_2,g)
=
\bigl\{(R_j,p_j,q_j,h_j)\bigr\}_{j=1}^{o(f,g)},\ee 
where $o(f,g)$ is a  positive integer and $R_j$ are closed surfaces provided with branched covering  maps
$$p_j:\, R_j\rightarrow C_1, \ \ \ q_j:\, R_j\rightarrow C_2, \ \ \ 
h_j:\, R_j\rightarrow C,\ \ \ 1\leq j \leq o(f,g),$$ such that: 
\begin{enumerate}[label=\upshape(\alph*)]
\item The equalities 
\be \la{pes} h_j=f\circ p_j=g\circ q_j, \ \ \ 1\leq j \leq o(f,g),\ee hold.
\item The maps $p_j$ and $q_j$, $  1\leq j \leq o(f,g)$, 
have no non-trivial common compositional right factor. 
\item The equalities    
\be \sum_{j}\deg p_j=  \deg g,\ \ \ \ \sum_{j}\deg q_j= \deg f\ee hold. 
\end{enumerate} 

Furthermore, for any branched covering  maps $p:\, R\rightarrow C_1,$  $q:\, R\rightarrow C_2$ and $h:\, R\rightarrow C$ 
 satisfying 
\be \l{mm} h=f\circ p=g\circ q\ee there exist a uniquely defined  index $j$ and 
a branched covering  map $w:\, R\rightarrow R_j$ such that the equalities 
\be \l{univ}  h= h_j\circ w, \ \ \ p= p_j\circ  w, \ \ \ q= q_j\circ w\ee
hold. 
\et
\pr
Let $\mathcal O_1,\dots,\mathcal O_{o(f,g)}$ be the orbits
of the diagonal action of $\pi_1(C\setminus S_{f,g},z_0)$ on
$f^{-1}\{z_0\}\times g^{-1}\{z_0\}$, for a fixed $z_0\in C\setminus S_{f,g}$. 
Choose a representative $(a_j,b_j)$ in each orbit.
Applying Theorem \ref{p1} to $(a_j,b_j)$, we obtain
$R_j,p_j,q_j,h_j$ satisfying $(a)$ and $(b)$, together
with a point $c_j\in h_j^{-1}\{z_0\}$ such that
\[
p_j(c_j)=a_j,\qquad q_j(c_j)=b_j.
\]

Assume now that \eqref{mm} holds. Observe that
one of the chosen representatives $(a_j,b_j)$ has
the form $(p(c),q(c))$ for a point
$c\in h^{-1}\{z_0\}$.
Indeed, choose any $d\in h^{-1}\{z_0\}$, and let
$\mathcal O_j$ be the orbit containing $(p(d),q(d))$.
By the definition of the orbit, there exists
$\gamma\in\pi_1(C\setminus S_{f,g},z_0)$ such that
\[
\gamma_f(p(d))=a_j,\qquad \gamma_g(q(d))=b_j.
\]
Lift a representative of $\gamma$ under $h$ starting
at $d$ (not necessarily uniquely), and let $c$ be its endpoint. Composing this
lift with $p$ and $q$ gives lifts under $f$ and $g$,
respectively, so
\[
p(c)=\gamma_f(p(d))=a_j,\qquad
q(c)=\gamma_g(q(d))=b_j.
\]
The universal property of $h_j,p_j,q_j$ now gives
a branched covering map \linebreak $w:R\to R_j$ satisfying
\eqref{univ} and $w(c)=c_j$.

To prove the uniqueness of $j$, suppose that for some
$i$ there exists a branched covering map
$\t w:R\to R_i$ such that
\[
h=h_i\circ\t w,\qquad
p=p_i\circ\t w,\qquad q=q_i\circ\t w.
\]
Since $\t w$ is surjective, there exists $e\in R$ with
$\t w(e)=c_i$. Then
\[
h(e)=z_0,\qquad p(e)=a_i,\qquad q(e)=b_i.
\]
Join $c$ to $e$ by a path in $R\setminus h^{-1}(S_{f,g})$,
and let $\delta\in\pi_1(C\setminus S_{f,g},z_0)$
be represented by its image under $h$. Then
\[
\delta_f(a_j)=a_i,\qquad \delta_g(b_j)=b_i.
\]
Thus, $(a_j,b_j)$ and $(a_i,b_i)$ belong to the
same orbit, so $i=j$.

Finally, the stabilizer of $(a_j,b_j)$ is
$\Gamma_{f,a_j}\cap\Gamma_{g,b_j}$, which is
$\Gamma_{h_j,c_j}$ by construction. Consequently,
\[
|\mathcal O_j|
=[\pi_1(C\setminus S_{f,g},z_0):\Gamma_{h_j,c_j}]
=\deg h_j.
\]
Summing over the orbits gives
\[
\sum_{j=1}^{o(f,g)}\deg h_j
=\sum_j|\mathcal O_j|
=\deg f\cdot\deg g.
\]
Since
\[
\deg h_j=\deg f\,\deg p_j=\deg g\,\deg q_j,
\]
this proves $(c)$. \qed

We call the collection 
\be 
(C_1,f)\times(C_2,g)
=
\bigl\{(R_j,p_j,q_j,h_j)\bigr\}_{j=1}^{o(f,g)},
\ee 
which appears in Theorem \ref{p11}, the {\it fiber product} of $f$ and $g$, and we refer to the quadruples $(R_j,p_j,q_j,h_j)$ as its components. 
Note that if $f$ and $g$ are holomorphic maps between compact Riemann surfaces, then these components are in a one-to-one correspondence with the components of the algebraic curve \be \l{cura} f(x)=g(y)\ee defined in the product of $C_1$ and $C_2$. We will say that the fiber product of $f$ and $g$ is irreducible if $o(f,g)=1$ and reducible otherwise.

Theorem \ref{p11} easily implies the following useful criterion, proved in the holomorphic setting in \cite{semi}, which allows one to apply Proposition \ref{l2} to the study of irreducible fiber products.

\begin{corollary} \l{21} A solution $f,g,p,q$ of the equation $f\circ p=g\circ q$
in branched covering maps between closed surfaces is good
whenever any two of the following three conditions are satisfied:
\begin{itemize}
    \item The fiber product of $f$ and $g$ is irreducible,
    \item The maps $p$ and $q$ have no non-trivial common compositional right factor,
    \item The equalities $\deg f = \deg q$ and $\deg g = \deg p$ hold. \qed
\end{itemize}
\end{corollary}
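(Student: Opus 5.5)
Since ``good'' is defined as the conjunction of the second and third conditions, we only need the three pairwise implications. Two of them reduce to showing that each condition forces the missing one, and we will use the universal property and the degree formula of Theorem \ref{p11} throughout. Suppose $h=f\circ p=g\circ q$. Theorem \ref{p11} gives a unique index $j$ and a branched covering map $w\colon R\to R_j$ with
\[
h=h_j\circ w,\qquad p=p_j\circ w,\qquad q=q_j\circ w .
\]
Moreover,
\[
\sum_i\deg p_i=\deg g \quad\text{and}\quad \sum_i\deg q_i=\deg f .
\]

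\emph{Irreducible plus no common right factor.} Irreducibility means $o(f,g)=1$, so $\deg p_1=\deg g$ and $\deg q_1=\deg f$. Since $p=p_1\circ w$ and $q=q_1\circ w$, the map $w$ is a common compositional right factor of $p$ and $q$. Hence $\deg w=1$, which gives $\deg p=\deg g$ and $\deg q=\deg f$, so the solution is good.

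\emph{Irreducible plus the degree equalities.} Again $o(f,g)=1$ and $\deg p_1=\deg g=\deg p=\deg p_1\deg w$, so $\deg w=1$. Now suppose $p=\tt p\circ u$ and $q=\tt q\circ u$ is a common right factorization. Then $\tt p,\tt q$ solve $f\circ\tt p=g\circ\tt q$, because composing with the surjection $u$ preserves the equality. By Theorem \ref{p11}, $\tt p=p_{1}\circ w'$ for some $w'$, since there is only one component. Therefore
\[
\deg g=\deg p=\deg p_1\deg w'\deg u=\deg g\,\deg w'\deg u,
\]
which forces $\deg u=1$. Hence there is no non-trivial common right factor.

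\emph{No common right factor plus the degree equalities.} This is the step that needs the irreducibility to be derived rather than assumed. By the universal property, $w$ is a common right factor, so $\deg w=1$ and $\deg p_j=\deg p=\deg g$. Since $\sum_i\deg p_i=\deg g$ and every term is at least $1$, the component $j$ must be the only one, so $o(f,g)=1$. Goodness itself is already immediate from the two hypotheses; the derived irreducibility is what connects this case to Proposition \ref{l2}.

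Throughout, I rely on the uniqueness part of Theorem \ref{p11} and on the degree sum identity. The only subtle point is the factorization argument in the second case, where one must check that $\tt p,\tt q$ still satisfy the functional equation. They do, because $u$ is surjective, so $f\circ\tt p\circ u=g\circ\tt q\circ u$ implies $f\circ\tt p=g\circ\tt q$.
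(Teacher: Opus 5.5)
Your proof is correct and is essentially the argument the paper intends: the corollary is stated without proof as an immediate consequence of the universal property in Theorem~\ref{p11} and the degree identities in its part (c), which is exactly what you use in each case. Your extra derivation of irreducibility in the third case is worth keeping, since the paper later uses that direction (good implies $o(f,g)=1$) in Proposition~\ref{p65}. In the second case, the preliminary computation $\deg w=1$ is not used by your argument, although it gives a shorter finish: $w$ is then a homeomorphism, and $p_1,q_1$ have no non-trivial common right factor by Theorem~\ref{p11}(b).
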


Fix a labeling $\{z_1, z_2, \dots , z_r\}$ of the points of $S_{f,g}$, and let 
$$
\mu_{f,z_i}=(f_{i,1},f_{i,2}, \dots , f_{i,u_i}), \qquad 
\mu_{g,z_i}=(g_{i,1},g_{i,2}, \dots , g_{i,v_i}), \qquad 1\leq i \leq r. 
$$
The following result generalizes the corresponding result of Fried (see \cite{f3}, Proposition 2) concerning the genus of the irreducible algebraic curve \eqref{cura} defined by rational functions $f,g$ on the Riemann sphere.

\bc \la{p3} Let $f:\, C_1\rightarrow C$ and $g:\, C_2\rightarrow C$ be 
branched covering maps   between closed surfaces. Then, in the above notation,  the formula 
\be \la{rh0} 
\sum_{j=1}^{o(f,g)}\bigl(2-2g(R_j)\bigr) =
\sum_{i=1}^{r}
\sum_{j_1=1}^{u_{i}} \sum_{j_2=1}^{v_{i}} \gcd(f_{i,j_1}, g_{i,j_2}) \;-\; \bigl(r+2g(C)-2\bigr)\,\deg f\,\deg g
\ee
holds.
\ec
\pr
For each $i$, $1\leq i \leq r$, fix a small loop $\gamma^i\in \pi_1(C\setminus S_{f,g},z_0)$ around $z_i$ such that the partitions $\mu_{f,z_i}$ and $\mu_{g,z_i}$ coincide with the collections of cycle lengths in the permutations $\gamma^i_f$ and $\gamma^i_g$.  
For any $j$, $1\leq j \leq o(f,g)$, the Riemann–Hurwitz formula for the branched covering map $h_j:R_j\to C$ gives
\[
2-2g(R_j) = \deg h_j \cdot (2-2g(C)) - \sum_{i=1}^{r}\bigl(\deg h_j - e_i(j)\bigr),
\]
where $e_i(j)$ is the number of disjoint cycles in the permutation $\gamma^i_{h_j}$ acting on $h_j^{-1}(z_0)$.  By construction, $e_i(j)$ is also the number of cycles
of $\gamma^i_{f,g}$ on the orbit corresponding to $R_j$. 
Equivalently,
\[
2-2g(R_j) = \sum_{i=1}^{r} e_i(j) - \bigl(r+2g(C)-2\bigr)\deg h_j.
\]
Summing over all components $R_j$ and using that $$\sum_j \deg h_j = \deg f\,\deg g$$ by Theorem \ref{p11}, we obtain
\[
\sum_{j=1}^{o(f,g)}\bigl(2-2g(R_j)\bigr) = \sum_{j=1}^{o(f,g)}\sum_{i=1}^{r} e_i(j) \;-\; \bigl(r+2g(C)-2\bigr)\deg f\,\deg g.
\]

On the other hand, it follows from the construction of the diagonal monodromy action that a pair of cycles of lengths $d_1$ and $d_2$ in the permutations $\gamma^i_f$ and $\gamma^i_g$, $1\leq i \leq r$, gives rise to $\gcd(d_1,d_2)$ cycles of length $\operatorname{lcm}(d_1,d_2)$ in the permutation $\gamma^i_{f,g}$. 
Thus, for a given $i$,
\[
\sum_{j=1}^{o(f,g)} e_i(j) = \sum_{j_1=1}^{u_i}\sum_{j_2=1}^{v_i} \gcd(f_{i,j_1}, g_{i,j_2}).
\]
Substituting this into the previous sum yields the desired formula. \qed

\subsection{Normalizations and Fried's theorem}   
In general, it is difficult to determine when the fiber product of two 
branched covering maps has a single component. For example, even when 
$f$ and $g$ are polynomials, a complete description of reducible 
algebraic curves of the form \eqref{cura} --- the so-called 
Davenport--Lewis--Schinzel problem --- was the subject of numerous 
studies over several decades. Following a long series of partial 
results, the problem was recently settled in \cite{nef}.  
Nevertheless, a number of results concerning reducibility of curves 
of the form \eqref{cura} have a very general character, and remain true for arbitrary branched covering maps. In this 
final section we provide several such results. 

We start with the following statement. 

\bp \la{p65}
The fiber product of two branched covering maps $f: C_1\rightarrow C$ and $g: C_2\rightarrow C$   between closed surfaces is irreducible if and only if for any $z_0\in C\setminus S_{f, g}$ and any $a\in f^{-1}(z_0),$ $b\in g^{-1}(z_0),$ the equality 
\be \l{ieq} 
\Gamma_{f,a}\Gamma_{g,b} = \Gamma_{g,b}\Gamma_{f,a} = \pi_1(C\setminus S_{f, g}, z_0)
\ee 
holds. 
\ep

\pr Let $R,p,q,h$ be a component of the fiber product of $f$ and $g$. Then, by Corollary \ref{21}, the equality $o(f,g)=1$ holds if and only if  $f,g,p,q$ form a good solution of the equation $f\circ p=g\circ q$. Since, by properties of fiber products, $p$ and $q$ have no non-trivial common compositional right  factor, the  proposition follows now from Proposition  \ref{l2}. \qed

Proposition \ref{p65} implies the following corollary. 

\bc \l{corcor} Let  $f: C_1\rightarrow C$ and $g: C_2\rightarrow C$  be branched covering maps  between closed surfaces such that their fiber product is irreducible. Then for any decomposition of $f$ into a composition of  branched covering maps  between closed surfaces  $f=f_1\circ f_2$, the fiber product of $f_1$ and $g$ is also irreducible.  
\ec
\pr Indeed, it is easy to see that equality \eqref{ieq}  implies that $$
\Gamma\,\Gamma_{g,b} = \Gamma_{g,b}\Gamma = \pi_1(C\setminus S_{f, g}, z_0)$$
for any subgroup $\Gamma$ of $\pi_1(C\setminus S_{f, g}, z_0)$ containing 
$\Gamma_{f,a}$. \qed

The following  result generalizes the corresponding result about  polynomial curves \eqref{cura} (see \cite{ehr}, \cite{tve}).

\bp \la{p66} The fiber product of two branched covering maps $f: C_1\rightarrow C$ and $g: C_2\rightarrow C$   between closed surfaces is irreducible whenever their degrees are coprime.
\ep
\pr If the degrees of $f$ and $g$ are coprime, then part (a) of Theorem \ref{p11} implies that the degree of each $h_j$ is divisible by $\deg f \deg g$, implying that $\deg p_j \geq \deg g$ and $\deg q_j \geq \deg f$. Combined with part (c), this yields $o(f,g)=1$. \qed

Let $f: C_1\rightarrow C$ and $g: C_2\rightarrow C$ be branched covering maps. 
We say that $f$ and $g$ have a \emph{non-trivial common compositional left factor} if there exist a closed surface $\t C$ and branched covering maps 
$\tilde f: C_1\to \widetilde C$, $\tilde g: C_2\to \widetilde C$ and $w:\widetilde C\to C$ with $\deg w>1$ such that 
\be \label{fac} f = w\circ\tilde f,\qquad g = w\circ\tilde g. \ee

\bp \la{p67} 
Branched covering maps $f: C_1\rightarrow C$ and $g: C_2\rightarrow C$   between closed surfaces  have no non-trivial common  compositional left factor if and only if for any $z_0\in C\setminus S_{f,g}$ and any $a\in f^{-1}(z_0),$ $b\in g^{-1}(z_0),$ the equality 
\[
\langle \Gamma_{f,a},\Gamma_{g,b} \rangle = \pi_1(C\setminus S_{f,g}, z_0)
\] 
holds. 
\ep

\pr Assume that \eqref{fac} holds, and let $z_0\in C\setminus S_{f,g}$ and $\tilde{z}_0\in w^{-1}(z_0).$ Then for any 
$a\in \tilde{f}^{-1}(\tilde{z}_0)$ and $b\in \tilde{g}^{-1}(\tilde{z}_0),$ we have $\Gamma_{f,a}\subseteq\Gamma_{w,\tilde{z}_0}$ and $\Gamma_{g,b}\subseteq\Gamma_{w,\tilde{z}_0}$. Thus,
\[
\langle\Gamma_{f,a},\Gamma_{g,b}\rangle\subseteq\Gamma_{w,\tilde{z}_0},
\] 
and $\Gamma_{w,\tilde{z}_0}$ is a proper subgroup of $\pi_1(C\setminus S_{f,g}, z_0)$ because 
\[
[\pi_1(C\setminus S_{f,g}, z_0):\Gamma_{w,\tilde{z}_0}]=\deg w>1.
\] 

Conversely, if for some $a\in f^{-1}(z_0)$ and $b\in g^{-1}(z_0),$ the subgroup $\Gamma=\langle\Gamma_{f,a},\Gamma_{g,b}\rangle$ does not coincide with the entire group $\pi_1(C\setminus S_{f,g}, z_0)$, then, since it has finite index, it is of the form $\Gamma_{w,\tilde{z}_0}$ for some branched covering map $w$ of degree greater than 1, and the equalities \eqref{fac} hold for some $\tilde{f}$ and $\tilde{g}$. \qed

\bc \l{kaka}
If branched covering maps between closed surfaces $f$ and $g$
have a non-trivial common compositional left factor, then the
fiber product of $f$ and $g$ is reducible.
\ec

\pr
By Proposition \ref{p67},
$\langle\Gamma_{f,a},\Gamma_{g,b}\rangle$ is contained in the
proper subgroup $\Gamma_{w,\tilde z_0}$ of
$\pi_1(C\setminus S_{f,g},z_0)$. Hence,
\eqref{ieq} is impossible,  
and the fiber product of $f$ and $g$ is reducible by
Proposition \ref{p65}.
\qed

Note that the subgroups $\langle \Gamma_{f,a},\Gamma_{g,b} \rangle$, similar to the subgroups $\Gamma_{f,a}\cap \Gamma_{g,b}$, in general are different for different choices of 
$a\in f^{-1}(z_0)$ and $b\in g^{-1}(z_0)$. 
Note also that the proof of Proposition \ref{p67} shows that non-trivial common compositional left factors of $f$ and $g$ correspond to proper subgroups of 
$\pi_1(C\setminus S_{f,g},z_0)$ containing  subgroups of the form $\langle \Gamma_{f,a},\Gamma_{g,b} \rangle$.

For a surface $R$, denote by $\operatorname{Aut}(R)$ the group of
orientation-preserving homeomorphisms of $R$.
Let us recall that a branched covering map $h:R\rightarrow C$ is called a Galois covering if its deck transformation group
\[
\operatorname{Aut}(R,h)=\{\mu \in \operatorname{Aut}(R) \mid h\circ\mu = h\}
\]
acts transitively on the fibers of $h$. Equivalently, $h$ is a Galois covering if for any point $z_0\in C\setminus S_h$ and any $e\in h^{-1}(z_0)$, the subgroup $\Gamma_{h,e}$ is normal in $\pi_1(C\setminus S_{h},z_0).$ Thus, for a Galois covering $h$, all subgroups $\Gamma_{h,e}$, $e\in h^{-1}(z_0)$, are equal, and we will simply write $\Gamma_h$ for them.

The normalization of a branched covering map $h\colon R\to C$ is the Galois covering $\widehat{h}\colon \widehat{R}\to C$ satisfying $\widehat{h}=h\circ t$ for some branched covering map $t\colon \widehat{R}\to R$, with the following universal property: for every Galois covering $h'\colon R'\to C$ such that  $h'=h\circ t'$ for some branched covering map $t':R'\rightarrow R$, there exists a branched covering map $u\colon R'\to \widehat{R}$ such that $h'=\widehat{h}\circ u$.

\bt For any branched covering map $h:R\rightarrow C$ between closed surfaces, the normalization $\widehat{h} :\widehat{R}\rightarrow C$ exists and is unique up to equivalence.  Furthermore, $S_h=S_{\widehat h}$. 
\et
\pr
For $z_0\in C\setminus S_h$, let us consider the group $\bigcap_{e\in h^{-1}(z_0)} \Gamma_{h,e}$. This is a normal subgroup of finite index in $\pi_1(C\setminus S_h, z_0)$, hence it corresponds to a Galois covering $\widehat h:\widehat R\to C$ 
with 
\be \l{inr} \Gamma_{\widehat h}=\bigcap_{e\in h^{-1}(z_0)} \Gamma_{h,e}.\ee Moreover, it is clear that  $\widehat{h}=h\circ t$ for some branched covering map $t\colon \widehat{R}\to R$.

Let now $h'=h\circ t'$ be a Galois covering.
Since $S_h\subseteq S_{h'}$, we may choose
$z_0\in C\setminus S_{h'}$ and regard all covering
subgroups as subgroups of
$\pi_1(C\setminus S_{h'},z_0)$.
The equality defining $\Gamma_{\h h}$ remains valid
after adding these punctures.
Since $\Gamma_{h'}$ is normal, the inclusion
$\Gamma_{h'}\subseteq\Gamma_{h,e}$ for some
$e\in h^{-1}(z_0)$ implies this inclusion for every
$e\in h^{-1}(z_0)$.
Hence, $\Gamma_{h'}\subseteq\Gamma_{\h h}$, which gives
a branched covering map $u:R'\to\h R$ satisfying
$h'=\h h\circ u$.

 Uniqueness up to equivalence follows from the universal property:
any two normalizations factor through each other, so the
corresponding maps between their source surfaces have degree one.
  
Finally, a point $z\in C$ belongs to $S_h$ if and only if a small loop $\gamma\in \pi_1(C\setminus S_h, z_0)$ around $z$ does not belong to $\Gamma_{h,e}$ for some $e$ or, equivalently, does not belong to the group \eqref{inr}. Since the same condition determines points $z$ that belong to $S_{\widehat h}$, we conclude that $S_{\widehat h}=S_h$. 
 \qed


The following two lemmas show that fiber products of maps, one of which is Galois, possess special properties that make them easier to handle.

\begin{lemma}
Let $f:C_1\to C$ and $g:C_2\to C$ be branched covering
maps between closed surfaces. Assume that $g$ is a
Galois covering. Then the maps $h_j:R_j\to C$
corresponding to the components of the fiber product
of $f$ and $g$ are equivalent. Moreover, if $f$ is
also a Galois covering, then all these maps are
Galois coverings.
\end{lemma}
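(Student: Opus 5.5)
We work at a base point $z_0\in C\setminus S_{f,g}$, writing $\pi=\pi_1(C\setminus S_{f,g},z_0)$. Since $g$ is Galois, $\Gamma_{g,b}=\Gamma_g$ is the same normal subgroup for every $b\in g^{-1}\{z_0\}$. By the construction in the proof of Theorem \ref{p11}, the component $(R_j,p_j,q_j,h_j)$ corresponding to the orbit of $(a_j,b_j)$ satisfies
\[
\Gamma_{h_j,c_j}=\Gamma_{f,a_j}\cap\Gamma_{g,b_j}=\Gamma_{f,a_j}\cap\Gamma_g .
\]
Equivalence classes of branched coverings unramified outside $S_{f,g}$ correspond to conjugacy classes of finite-index subgroups of $\pi$. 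So it suffices to show that the subgroups $\Gamma_{f,a}\cap\Gamma_g$, $a\in f^{-1}\{z_0\}$, are all conjugate in $\pi$.

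For the first statement, fix two points $a,a'\in f^{-1}\{z_0\}$. By transitivity of the monodromy of $f$, there is $\gamma\in\pi$ with $\gamma_f(a)=a'$. Then $\Gamma_{f,a'}=\gamma\Gamma_{f,a}\gamma^{-1}$, and since $\Gamma_g$ is normal, $\gamma\Gamma_g\gamma^{-1}=\Gamma_g$. Hence
\[
\gamma(\Gamma_{f,a}\cap\Gamma_g)\gamma^{-1}=\Gamma_{f,a'}\cap\Gamma_g ,
\]
and this gives the conjugacy. Applied to $a=a_i$ and $a'=a_j$, it shows that $h_i$ and $h_j$ are equivalent for all $i,j$.

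For the second statement, assume in addition that $f$ is Galois, so $\Gamma_{f,a}=\Gamma_f$ is normal. Then $\Gamma_{h_j,c_j}=\Gamma_f\cap\Gamma_g$ is an intersection of normal subgroups and is therefore normal in $\pi$. Normality is independent of the chosen point of the fiber, because all stabilizers in a fiber are conjugate. Hence each $h_j$ is a Galois covering by the criterion recalled above.

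The main obstacle is the technical point that the Galois criterion was stated with respect to $\pi_1(C\setminus S_h,z_0)$, whereas we are working in $\pi$. We have $S_{h_j}\subseteq S_{f,g}$; in fact equality holds by Corollary \ref{co1}, since $p_j$ and $q_j$ have no common right factor. Even without this, the deck group of a covering does not change when extra punctures are added, so normality in $\pi$ yields the Galois property. I would add a short remark to this effect, and also note that equivalence defined via $\pi$ agrees with the notion of equivalence used in the paper.
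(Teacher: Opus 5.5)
Your proof is correct and follows the paper's own argument. You conjugate $\Gamma_{f,a}\cap\Gamma_g$ by an element carrying $\Gamma_{f,a}$ to $\Gamma_{f,a'}$ and use normality of $\Gamma_g$ to get the equivalence of the $h_j$, then use normality of $\Gamma_f\cap\Gamma_g$ for the Galois property; your extra observation that $S_{h_j}=S_{f,g}$ by Corollary \ref{co1} correctly settles the basepoint-group issue that the paper leaves implicit.
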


\begin{proof}
For $z_0\in C\setminus S_{f,g}$ and
$a_1,a_2\in f^{-1}(z_0)$, choose
$\gamma\in\pi_1(C\setminus S_{f,g},z_0)$ with
\[
\gamma\Gamma_{f,a_1}\gamma^{-1}=\Gamma_{f,a_2}.
\]
Then, because $\Gamma_g$ is normal,
\[
\gamma(\Gamma_{f,a_1}\cap\Gamma_g)\gamma^{-1}
=\Gamma_{f,a_2}\cap\Gamma_g.
\]
Hence the corresponding covering maps $h_j:R_j\to C$
are equivalent.

Furthermore, if $f$ is also Galois, then the
intersection $\Gamma_f\cap\Gamma_g$ is normal in
$\pi_1(C\setminus S_{f,g},z_0)$, so all the maps $h_j$
are Galois coverings.
\end{proof}

\bl \l{12} 
Let $f\colon C_1\rightarrow C$ and $g\colon C_2\rightarrow C$ be branched covering maps between closed surfaces. Assume that $g$ is a Galois covering. Then for any $z_0\in C\setminus S_{f,g}$ and $a\in f^{-1}(z_0)$, the subgroups $\Gamma_{f,a}$ and $\Gamma_g$ are permutable, and the product subgroups $\Gamma_{f,a}\Gamma_g,$  $a\in f^{-1}(z_0),$ are conjugate.
\el 
\pr 
For any $\gamma\in\Gamma_{f,a}$ and $\delta\in\Gamma_g,$ we have 
\[
\gamma\delta = (\gamma\delta\gamma^{-1})\gamma \in \Gamma_g\Gamma_{f,a},
\]
and similarly,
\[
\delta\gamma = \gamma(\gamma^{-1}\delta\gamma)\in\Gamma_{f,a}\Gamma_g.
\]
Thus, the subgroups $\Gamma_{f,a}$ and $\Gamma_g$ are permutable. 

Furthermore, if $a_1,a_2\in f^{-1}(z_0),$ then there exists $\gamma\in\pi_1(C\setminus S_{f,g},z_0)$ such that $$\gamma\Gamma_{f,a_1}\gamma^{-1}=\Gamma_{f,a_2}.$$ Since $\Gamma_g$ is a normal subgroup, it follows that
\[
\gamma(\Gamma_{f,a_1}\Gamma_g)\gamma^{-1} = (\gamma\Gamma_{f,a_1}\gamma^{-1})(\gamma\Gamma_g\gamma^{-1}) = \Gamma_{f,a_2}\Gamma_g. \eqno{\Box}
\]

For branched covering maps $f$ and $g$, where $g$ is a Galois covering, and \linebreak $z_0\in C\setminus S_{f, g}$, we denote by $f\star g$ any representative of the equivalence class of branched coverings corresponding to the subgroup $\Gamma_{f,a}\Gamma_g$.

\bt \la{kij} Let $f: C_1\rightarrow C$ and $g: C_2\rightarrow C$ be branched covering maps between closed surfaces.  Then the equality 
\be \l{thee} o(f,g)=o(f\star \widehat g,\;g)\ee
holds.
\et

\pr 
Set
$
G=\pi_1(C\setminus S_{f,g},z_0)
$
and consider its diagonal action on
\[
X=f^{-1}(z_0)\times g^{-1}(z_0).
\]
Identifying the fibers of $f$ and $g$ with the corresponding sets
of left cosets, we identify $X$ with the set of pairs
\[
\bigl(
\alpha_{j_1}\Gamma_{f,a},\, 
\beta_{j_2}\Gamma_{g,b}
\bigr),
\qquad
1\leq j_1\leq n,\quad
1\leq j_2\leq m,
\]
where $n=\deg f$ and $m=\deg g$. Two such pairs
\[
\bigl(
\alpha_{j_1}\Gamma_{f,a},\,
\beta_{j_2}\Gamma_{g,b}
\bigr)
\quad\text{and}\quad
\bigl(
\alpha_{i_1}\Gamma_{f,a},\,
\beta_{i_2}\Gamma_{g,b}
\bigr)
\]
belong to the same orbit if and only if there exists
$\gamma\in G$ such that
\[
\gamma\alpha_{j_1}\Gamma_{f,a}
=
\alpha_{i_1}\Gamma_{f,a}
\quad\text{and}\quad
\gamma\beta_{j_2}\Gamma_{g,b}
=
\beta_{i_2}\Gamma_{g,b}.
\]
Equivalently, this holds if and only if the set
\be \l{cfr}
\alpha_{i_1}\Gamma_{f,a}\alpha_{j_1}^{-1}
\cap
\beta_{i_2}\Gamma_{g,b}\beta_{j_2}^{-1}
\ee
is non-empty.

We next consider the diagonal monodromy action on
\[
Y=(f\star\h g)^{-1}(z_0)\times g^{-1}(z_0).
\]
This action is naturally defined for the group
\[
G'=\pi_1(C\setminus S_{f\star\h g,g},z_0).
\]
Since $f\star\h g$ is a compositional left factor of $f$, we have
$$
S_{f\star\h g,g}\subseteq S_{f,g}.
$$ 
Hence, the inclusion
\[
C\setminus S_{f,g}
\longrightarrow
C\setminus S_{f\star\h g,g}
\]
induces a natural epimorphism
$
\rho:G\longrightarrow G'.
$
Composing the action of $G'$ on $Y$ with $\rho$, we obtain an
action of $G$ on $Y$. Since $\rho$ is surjective, the orbits of
this action of $G$ are exactly the orbits of the original action
of $G'$. 
The subgroup of $G$ corresponding to the covering
$f\star\h g$ is $\Gamma_{f,a}\Gamma_{\h g}$. Therefore, after
identifying the fibers with sets of left cosets, the action of
$G$ on $Y$ is identified with its diagonal action on the pairs
\[
\bigl(
\alpha_{j_1}\Gamma_{f,a}\Gamma_{\h g},\, 
\beta_{j_2}\Gamma_{g,b}
\bigr).
\]

We now define a map $\varphi$ from the orbits of $G$ on $X$ to
the orbits of $G$ on $Y$. Namely, the orbit containing
\[
\bigl(
\alpha_{j_1}\Gamma_{f,a},\,
\beta_{j_2}\Gamma_{g,b}
\bigr)
\]
is mapped to the orbit containing
\[
\bigl(
\alpha_{j_1}\Gamma_{f,a}\Gamma_{\h g},\,
\beta_{j_2}\Gamma_{g,b}
\bigr).
\]

If the set \eqref{cfr} is non-empty, then the set
\be \l{cfr1}
\alpha_{i_1}\Gamma_{f,a}\Gamma_{\h g}\alpha_{j_1}^{-1}
\cap
\beta_{i_2}\Gamma_{g,b}\beta_{j_2}^{-1}
\ee
is also non-empty. Thus, $\phi$ is well-defined. Moreover, $\phi$ is surjective, since every left coset of
$\Gamma_{f,a}\Gamma_{\widehat g}$ contains a left coset
of $\Gamma_{f,a}$.

To prove the injectivity of $\phi$, we must show that if the set
\eqref{cfr1} is non-empty, then the set \eqref{cfr} is also
non-empty. Suppose that $x$ is an element of \eqref{cfr1}.
Since $\Gamma_{\h g}$ is normal, we have
\[
\alpha_{i_1}\Gamma_{f,a}\Gamma_{\h g}\alpha_{j_1}^{-1}
=
\alpha_{i_1}\Gamma_{f,a}\alpha_{j_1}^{-1}\Gamma_{\h g}.
\]
Therefore, $x$ can be represented in the form
\[
x
=
\alpha_{i_1}\alpha\alpha_{j_1}^{-1}\beta
=
\beta_{i_2}\gamma\beta_{j_2}^{-1}
\]
for some $\alpha\in\Gamma_{f,a}$,
$\beta\in\Gamma_{\h g}$, and $\gamma\in\Gamma_{g,b}$.

Furthermore, since
\[
\beta_{j_2}^{-1}\beta\beta_{j_2}
\in
\Gamma_{\h g}
\subseteq
\Gamma_{g,b},
\]
there exists $\gamma_1\in\Gamma_{g,b}$ such that
\[
\beta=\beta_{j_2}\gamma_1\beta_{j_2}^{-1}.
\]
Setting $y=x\beta^{-1}$, we obtain
\[
y
=
\alpha_{i_1}\alpha\alpha_{j_1}^{-1}
=
\beta_{i_2}\gamma\beta_{j_2}^{-1}\beta^{-1}
=
\beta_{i_2}\gamma\gamma_1^{-1}\beta_{j_2}^{-1}.
\]
Thus, $y$ belongs to the set \eqref{cfr}, and hence this set is
non-empty. 
\qed

While the condition that branched covering maps $f$ and $g$ have no non-trivial common compositional left factor in general does not imply that the fiber product of $f$ and $g$ is irreducible, Theorem \ref{kij} implies the following corollary.  

\bc \l{such} 
Let $f: C_1\rightarrow C$ and $g: C_2\rightarrow C$ be branched covering maps  between closed surfaces such that $f$ and $\widehat g$ have no non-trivial common compositional left factor. Then the fiber product of $f$ and $g$ is irreducible.
\ec 
\pr By Theorem \ref{kij}, equality \eqref{thee}
holds. On the other hand, since $f$ and $\widehat g$ have no non-trivial common left compositional factor,  
$$\Gamma_{f,a}\Gamma_{\widehat g}=\langle\Gamma_{f,a},\Gamma_{\widehat g}\rangle=\pi_1(C\setminus S_{f,g},z_0),$$ 
by Proposition \ref{p67}. Thus, 
$f\star\widehat g$ is equivalent to the identity map, and for such a map, the fiber product with any map is irreducible by Proposition \ref{p66}.\qed

Another corollary of 
Theorem \ref{kij} is the following statement.  

\bc 
Let $f: C_1\rightarrow C$ and $g: C_2\rightarrow C$ be branched covering maps between closed surfaces. Assume that $g$ is a Galois covering. Then the fiber product of $f$ and $g$ is reducible if and only if $f$ and $g$ have a non-trivial common left compositional factor. 
\ec 
\pr
Since for a Galois covering $g$, we have $\widehat g = g$,  the ``only if'' part follows from  Corollary \ref{such}.  The ``if'' part follows from Corollary \ref{kaka}. \qed

Let $h:\, R\rightarrow C$ be a branched covering map  of degree at least two.   The map 
$h$ is called {\it indecomposable} if the equality $h = h_2 \circ h_1$, where $h_1:\, R\rightarrow R_1,$ $h_2:\, R_1\rightarrow C$ are branched covering maps, implies that at least one of the maps $h_1, h_2$ is of degree one. It is clear that any $h$ as above admits a representation
of the form $$h = h_r \circ h_{r-1} \circ \dots \circ h_1,$$ where $h_1, h_2, \dots, h_r$ are indecomposable branched covering maps of degree at least two. It is also clear that 
$h$ is indecomposable if and only if for any $z_0\in C\setminus S_h$ the groups $\Gamma_{h,c}$, $c\in h^{-1}(z_0)$, are maximal subgroups of $\pi_1(C\setminus S_h, z_0).$ 
Note that a description of all possible decompositions of a covering is a highly non-trivial question, and even for rational functions  a complete answer is known when $h$ is a polynomial \cite{r1} or a Laurent polynomial \cite{pak}. 

The following result generalizes the corresponding result of Fried about irreducibility of algebraic curves \eqref{cura}, where $f$ and $g$ are rational
functions (see \cite{f2}, Proposition 2).

\bt \la{p6} 
Let $f: C_1\rightarrow C$ and $g: C_2\rightarrow C$ be branched covering maps between closed surfaces such that $o(f,g)>1.$ Then there exist branched covering maps  between closed surfaces  $f_1: \tilde{C}_{1}\rightarrow C,$ $g_1: \tilde{C}_{2}\rightarrow C,$  and $p: C_{1}\rightarrow \tilde{C}_{1},$ $q: C_{2}\rightarrow \tilde{C}_{2}$ such that: 
\begin{enumerate}[label=\upshape(\alph*)]
    \item The equalities $f = f_1 \circ p$ and $g = g_1 \circ q$ hold. 
   \item The maps $\widehat{f}_1$ and $\widehat{g}_1$ are equivalent.
    \item The equality $o(f,g) = o(f_1, g_1)$ holds. 
    
\end{enumerate}
\et
\pr For a branched covering map $h:\, R\rightarrow C$, denote by  
$d(h)$ the maximal number such that there
exists a decomposition of $h$ into a composition of $d(h)$ indecomposable branched covering maps of degree at least two. 
We proceed by induction on the number $d=d(f)+d(g)$.

If $d=2$, that is, if both maps 
$f,g$ are indecomposable, then Lemma \ref{12} and the maximality of $\Gamma_{f,a}$ imply  
that  either \be \la{ry} \Gamma_{f,a}\Gamma_{\widehat g}=\Gamma_{f,a}, \qquad a\in f^{-1}\{z_0\},\ee 
or \be \la{ryry} \Gamma_{f,a}\Gamma_{\widehat g}=\pi_1(C\setminus S_{f,g},z_0), \qquad a\in f^{-1}\{z_0\}.\ee
However, in the latter case Proposition \ref{p67} and Corollary \ref{such} yield that $o(f,g)=1$, in contradiction to the assumption. Therefore, 
equalities \eqref{ry} hold, and hence
$$\Gamma_{\widehat g}\subseteq\bigcap_{a\in f^{-1}\{z_0\}} \Gamma_{f,a}= \Gamma_{\widehat f} .$$ Similarly,  $\Gamma_{\widehat f} \subseteq \Gamma_{\widehat g}.$
Thus, $\Gamma_{\widehat g}= \Gamma_{\widehat f}$, and we can set $f_1=f,$ $g_1=g.$

Suppose now that $d>2.$
If $\Gamma_{\widehat f} =\Gamma_{\widehat g}$, then as above we can set  
$f_1=f,$ $g_1=g$, so assume that $\Gamma_{\widehat f} \neq \Gamma_{\widehat g}$. Then,   either  
\be \la{xzc} \Gamma_{f,a}\subsetneq \Gamma_{f,a}\Gamma_{\widehat g}, \qquad a\in f^{-1}\{z_0\},\ee
or 
$$\Gamma_{g,b}\subsetneq \ \Gamma_{g,b}\Gamma_{\widehat f}, \qquad b\in g^{-1}\{z_0\}.$$

Suppose, say, that \eqref{xzc} holds. Then
\be \l{kon} 
f=(f\star\widehat g)\circ p
\ee
for some branched covering map $p$ with $\deg p>1$.
Moreover, since equality \eqref{ryry} is impossible, we have
$\deg(f\star\widehat g)>1$.
The  decomposition \eqref{kon} implies the inequality $d(f\star\widehat g)<d(f)$.
Since Theorem~6.13 yields
\[
o(f\star\widehat g,g)=o(f,g)>1,
\]
we can apply the induction hypothesis to 
$f\star\widehat g$ and $g$, and the theorem follows. \qed

\end{document}